\newtheorem{thm}{Theorem}[section]
\newtheorem{Lemma}[thm]{Lemma}
\newtheorem{cor}[thm]{Corollary}
\newtheorem{pro}[thm]{Proposition}
\title{Decomposition theorem on matchable distributive lattices\thanks{This
 work was supported by NSFC (grant no. 10831001).}}
\author{Heping Zhang$^1$, Dewu Yang$^{1,2}$ and Haiyuan Yao$^1$}
\date{\small $^1$School of Mathematics and Statistics, Lanzhou
University, Lanzhou, Gansu 730000,\\ P. R. China,
zhanghp@lzu.edu.cn, hyyao@lzu.edu.cn.\\\vspace{0.2cm} \small
$^2$School of Mathematics and Statistics,  Henan University of
Science and Technology, Luoyang,\\ Henan 471003, P. R. China,
dewuyang0930@163.com}
\begin{document}
\setlength{\baselineskip}{18pt} \maketitle

\begin{abstract}
A distributive lattice structure ${\mathbf M}(G)$ has been
established on the set of perfect matchings of a plane bipartite
graph $G$. We call a lattice  {\em matchable distributive lattice}
(simply MDL) if it is isomorphic to such a distributive lattice. It
is natural to ask which lattices are  MDLs. We show that if a plane
bipartite graph $G$ is elementary, then ${\mathbf M}(G)$ is
irreducible. Based on this result, a decomposition theorem on MDLs
is obtained: a finite distributive lattice $\mathbf{L}$ is an MDL if
and only if each factor in any cartesian product decomposition of
$\mathbf{L}$ is an MDL. Two types of MDLs are presented:
$J(\mathbf{m}\times \mathbf{n})$ and $J(\mathbf{T})$, where
$\mathbf{m}\times \mathbf{n}$ denotes the cartesian product between
$m$-element chain and $n$-element chain, and $\mathbf{T}$ is a poset
implied by any orientation of a tree. \vspace{0.4cm}

\noindent \textbf{Key words:} \quad Perfect matching, Plane
bipartite graph, $Z$-transformation graph, Distributive lattice,
Decomposition theorem.\\

\noindent\textbf{AMS 2010 Subject Classifications:} 05C70, 05C90,
06D05, 92E10.
\end{abstract}


\section{Introduction}

Perfect matching of graphs is significant for theoretical chemistry
and theoretical physics. This graph-theoretical concept coincides
with that of the Kekul\'{e} structure of organic molecules. The
Kekul\'e structure count can be used to predict the stability of
benzenoid hydrocarbons. The carbon-skeleton of a benzenoid
hydrocarbon is a hexagonal system, i.e. 2-connected plane graph
every interior face of which is a  regular hexagon of side length
unit. Since $1980'$s there have been developed a combinatorial
object,  the $Z$-transformation graph (or resonance graph)
\cite{zgc, zgc1} on the set perfect matchings of a hexagonal system,
 late extended to a general plane bipartite graph \cite{zh,zz4,zz2,
zzy}; see a recent survey \cite{Zh2}. Randi\'c \cite{r,r2} showed
that the leading eigenvalue
 of the resonance graphs has a quite satisfactory
correlation  with the resonance energy of benzenoid hydrocarbons.

A domino tiling of a polygon in the plane corresponds to a perfect
matching of a related graph. In theoretical physics, a domino is
seen as a dimer, a diatomic molecule (as the molecule of hydrogen),
and each tiling is seen as a possible state of a solid or a fluid.
In 2003  Fournier \cite{fj} reintroduced Z-transformation graph
under name ``perfect matching graph" in investigating domino tiling
spaces of Saldanha et al. \cite{st}. E. R\'emila \cite{re,re2}
established
 the distributive lattice structure on the set of domino
tilings of a polygon by using Thurston's {\em height function}. In
general, a distributive lattice on the set of perfect matchings of a
plane bipartite graph was presented in terms of Z-transformation
digraph and the unit decomposition of alternating cycle systems with
respect to a perfect matching \cite{lz}.


Let $G$ be a finite and simple graph with vertex-set $V(G)$ and
edge-set $E(G)$. A {\em perfect matching} or {\em 1-factor} of $G$
is a set of independent edges which saturate all vertices of $G$.
Let $\mathcal{M}(G)$ denote the set of 1-factors of $G$. A plane
bipartite graph $G$ is {\em elementary} \cite{lm} if $G$ is
connected and every edge is contained in some 1-factor; further {\em
weakly elementary} \cite{zz2,wc} if every alternating cycle with
respect to some 1-factor together with its interior form an
elementary subgraph.

For a plane bipartite graph $G$, the $Z$-transformation graph $Z(G)$
is defined as a graph on $\mathcal{M}(G)$: $M, M'\in \mathcal{M}(G)$
are joined by an edge if and only if they differ only in one cycle
that is the boundary of an inner face of $G$.

To give an acyclic orientation of $Z(G)$ \cite{zz4}, a proper
2-coloring (white-black) of  bipartite graph $G$ is specified. For
$M\in \mathcal{M}(G)$, a cycle $C$ is said to be $M$-{\em
alternating} if the edges of $C$ appear alternately in and off $M$;
further {\em proper} ({\em improper}) \cite{zz3} if every edge of
$C$ belonging to $M$ goes from white (black) end-vertex to black
(white) end-vertex along the clockwise orientation of $C$. Now
Z-transformation digraph $\vec{Z}(G)$ is the orientation of $Z(G)$:
 an edge $M_1M_2$ of $Z(G)$ is oriented from  $M_1$ to $M_2$
if the symmetric difference $M_1\oplus M_2$ form a proper $M_1$- and
improper $M_2$-alternating cycle (the boundary of an inner face).

Since $\vec{Z}(G)$ has no directed cycles \cite{zz4}, it naturally
implies a partial ordering on $\mathcal{M}(G)$. This poset is
denoted by $\mathbf{M}(G)$. Then its Hasse diagram is isomorphic to
$\vec{Z}(G)$. Lam and Zhang \cite{lz} showed that $\mathbf{M}(G)$ is
a finite distributive lattice (FDL)  if $G$ is weakly elementary.
Further the first author of the present paper showed \cite{zh1} that
$\mathbf{M}(G)$ is direct sum of at least two distributive lattices
if $G$ is non-weakly elementary. By applying such a lattice
structure, Zhang et al. showed \cite{ZLS} that every connected
resonance graph of plane bipartite graphs is a median graph, and
extended Klav\v{z}ar et al.'s result \cite{rcm}  in the case of
cata-condensed benzenoid systems.

In different ways, Propp \cite{pj} established  a distributive
lattice structure on the set of $c$-orientations of a plane
bipartite graph $G$; Pretzel \cite{po} provided a new proof to
Propp's result. Similar structures were also given on the set of
reachable configurations of an edge firing game \cite{mc},
$\alpha$-orientations of a planar graph \cite{fs}, and flows of a
planar graph \cite{knk}.

In this paper we propose a  problem: which  distributive lattices
are isomorphic to distributive lattice $\mathbf{M}(G)$ on the set of
1-factors of a plane bipartite graphs $G$?  A lattice is called {\em
matchable distributive lattice} (simply MDL) if it is isomorphic to
such a distributive lattice $\mathbf{M}(G)$. Non-matchable
distributive lattices exist.
 We show that if a plane bipartite graph $G$ is elementary, then
${\mathbf M}(G)$ is irreducible. Based on this result, a
decomposition theorem on MDL is obtained (Theorem \ref{iff}): a
finite distributive lattice $\mathbf{L}$ is an MDL if and only if
each factor in any cartesian product decomposition of $\mathbf{L}$
is an MDL. Finally, we present two types of irreducible MDLs by
applying the {\em fundamental theorem for finite distributive
lattices} (FTFDL): $J(\mathbf{m}\times \mathbf{n})$ and
$J(\mathbf{T})$, where $\mathbf{m}\times \mathbf{n}$ denotes the
cartesian product between
 $m$-element chain and $n$-element chain, and $\mathbf{T}$ is a
poset implied by any orientation of a tree. Meantime,  we also show
that for  any order ideal $W$ of $\mathbf{m}\times \mathbf{n}$,
$J(W)$ is an MDL.

\section{Preliminaries}

Terms on poset and distributive lattice used in this paper can be
found in \cite{gb, gg, ec}. If $\mathbf{P}$ and $\mathbf{Q}$ are
posets, then the direct (cartesian) product of $\mathbf{P}$ and
$\mathbf{Q}$ is the poset $\mathbf{P}\times \mathbf{Q}$ on the set
$\{(x,y): x\in \mathbf{P}$ and $y\in \mathbf{Q}\}$ such that
$(x,y)\preceq (x', y')$ in $\mathbf{P}\times \mathbf{Q}$ if $x
\preceq x'$ in $\mathbf{P}$ and $y \preceq y'$  in $\mathbf{Q}$.


Let $\mathbf{L}$ be an FDL with the greatest element $\hat{1}$ and
the least element $\hat{0}$. If $\mathbf{L}$ can be expressed as the
direct product of a series of FDLs $\mathbf{L}_{j} (j\in J)$, i.e.
$\mathbf{L}= \prod_{j\in J}\mathbf{L}_{j},$ then we say that
$\mathbf{L}$ has a {\em (direct product) decomposition} $\prod_{j\in
J}\mathbf{L}_{j}$. A lattice with exactly one element is viewed as a
trivial lattice. An FDL is {\em irreducible} if it cannot be
expressed as direct product of  at least two non-trivial FDLs. A
decomposition $\mathbf{L}=\prod_{j\in J}\mathbf{L}_{j}$   is called
{\em irreducible}  if each $\mathbf{L}_{j} (j\in J)$ is non-trivial
and irreducible.

For a   decomposition $\mathbf{L}=\prod_{i=1}^n\mathbf{L}_{i}$, let
$\hat{1}_{i}$ and $\hat{0}_{i}$ denote the greatest element and the
least element of $\mathbf{L}_{i}$, respectively. Then $\hat{0}=
(\hat{0}_{1},\hat{0}_{2}, \cdots, \hat{0}_{n})$ and $\hat{1}=
(\hat{1}_{1},\hat{1}_{2}, \cdots, \hat{1}_{n})$.
\begin{figure}[h]
\refstepcounter{figure} \label{fig01}
\begin{center}
\includegraphics[width=6cm,keepaspectratio]{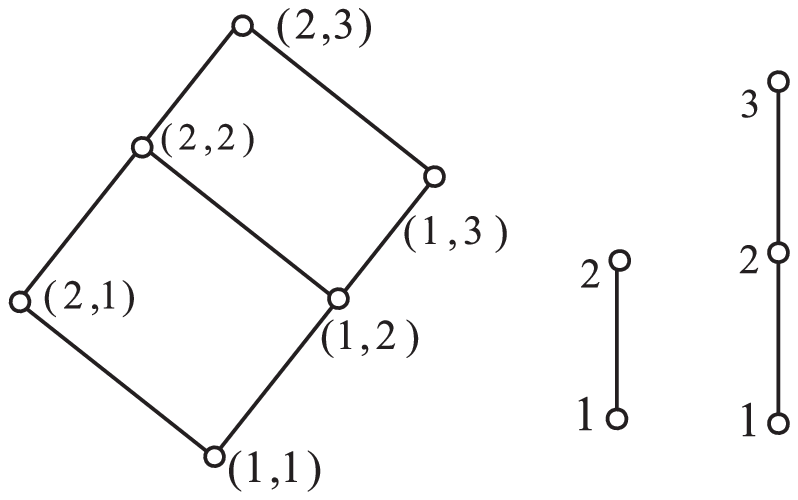}\\
Figure 1. A pair of central elements $(2,1)$ and $(1,3)$ of direct
product $\mathbf 2\times \mathbf 3$.
\end{center}
\end{figure}
If  each $\mathbf{L}_i$ is non-trivial and  $n \geq 2$,
$e_i=(\hat{0}_{1}, \cdots, \hat{0}_{i-1}, \hat{1}_{i},
\hat{0}_{i+1}, \cdots, \hat{0}_{n})$  is called a {\em central
element} of $\mathbf{L}$. For $x, y\in \mathbf{L}$, $x$ is called a
complement of $y$ if $x\vee y= \hat{1}$ and $x\wedge y= \hat{0}$.
The complement of $x$, when it exists,  is unique. For example, two
central elements $(2,1)$ and $(1,3)$   of $\mathbf{L}=\mathbf
2\times \mathbf 3$ are complementary each other (see Fig. 1). For a
positive integer $n$,   $\{1,2,...,n\}$ with its usual
 order forms an $n$-element chain, denoted by $\mathbf{n}$.

\begin{Lemma}\cite{gb}\label{unique complement}
Any central element of an FDL  has a unique complement.
\end{Lemma}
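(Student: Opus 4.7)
The plan is to split the statement into the two assertions that a central element admits a complement and that this complement is unique, and to handle them in that order.

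For existence, I would work directly in the given decomposition $\mathbf{L}=\prod_{i=1}^n\mathbf{L}_{i}$ and exhibit an explicit candidate. Given the central element
\[
e_i=(\hat{0}_{1},\ldots,\hat{0}_{i-1},\hat{1}_{i},\hat{0}_{i+1},\ldots,\hat{0}_{n}),
\]
the natural guess is
\[
\bar e_i=(\hat{1}_{1},\ldots,\hat{1}_{i-1},\hat{0}_{i},\hat{1}_{i+1},\ldots,\hat{1}_{n}).
\]
Since meets and joins in a direct product are computed coordinatewise, I would just verify componentwise that $e_i\vee\bar e_i=(\hat{1}_{1},\ldots,\hat{1}_{n})=\hat{1}$ and $e_i\wedge\bar e_i=(\hat{0}_{1},\ldots,\hat{0}_{n})=\hat{0}$, using $\hat{0}_{j}\vee\hat{1}_{j}=\hat{1}_{j}$ and $\hat{0}_{j}\wedge\hat{1}_{j}=\hat{0}_{j}$ in each factor $\mathbf{L}_{j}$. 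This is a direct, routine check.

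For uniqueness, I would invoke the standard fact that complements in a distributive lattice are unique whenever they exist; the excerpt already records this remark preceding the lemma. For completeness of the write-up, the short argument is: if $x$ and $y$ are both complements of $e_i$, then using distributivity
\[
x=x\wedge\hat{1}=x\wedge(y\vee e_i)=(x\wedge y)\vee(x\wedge e_i)=(x\wedge y)\vee\hat{0}=x\wedge y,
\]
so $x\le y$, and the symmetric computation gives $y\le x$, hence $x=y$.

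I do not expect a genuine obstacle here; the only delicate point is notational bookkeeping in the coordinatewise computation, but nothing beyond elementary lattice manipulations is needed, and the statement really serves as a preparatory remark to be used later in the decomposition theorem.
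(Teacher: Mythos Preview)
Your argument is correct and is the standard elementary proof: existence via the explicit coordinatewise complement $\bar e_i$, and uniqueness via the distributive identity. The paper itself does not supply a proof of this lemma at all---it simply cites Birkhoff's \emph{Lattice Theory}---so there is nothing to compare against; your write-up is exactly what one would expect if the proof were spelled out.
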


\begin{Lemma}\cite{gb}\label{permutation}
Any FDL has a unique irreducible decomposition, i.e. if both
$\prod_{i=1}^n\mathbf{L}_i$ and $\prod_{j=1}^m\mathbf{L}_j'$ are
irreducible decompositions of $\mathbf{L}$, then $m=n$ and there
exists a permutation $\pi$ of $[n]$ such that
$\mathbf{L}_{i}=\mathbf{L}'_{\pi(i)} (i=1, 2,\cdots, n)$.
\end{Lemma}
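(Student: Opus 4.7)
The plan is to reduce the problem to a statement about finite posets via the fundamental theorem for finite distributive lattices (FTFDL). Let $P = \mathrm{Irr}(\mathbf{L})$ denote the sub-poset of join-irreducible elements of $\mathbf{L}$, so that $\mathbf{L} \cong J(P)$. The core claim is that direct-product decompositions $\mathbf{L} \cong \mathbf{L}_1 \times \cdots \times \mathbf{L}_n$ correspond bijectively to partitions $P = P_1 \sqcup \cdots \sqcup P_n$ of $P$ into sub-posets with no comparabilities between distinct parts, and under this bijection $\mathbf{L}_i \cong J(P_i)$.

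The forward direction of this correspondence is straightforward: if $P = P_1 \sqcup P_2$ is such a partition, then the map $I \mapsto (I \cap P_1,\, I \cap P_2)$ is a lattice isomorphism $J(P) \to J(P_1) \times J(P_2)$, since every order ideal of $P$ splits uniquely along the partition. For the reverse direction I would first characterise the join-irreducibles of a product, showing that $(a,b) \in \mathbf{L}_1 \times \mathbf{L}_2$ is join-irreducible precisely when exactly one coordinate is join-irreducible and the other equals the bottom of its factor. This yields a canonical bijection $\mathrm{Irr}(\mathbf{L}_1 \times \mathbf{L}_2) \leftrightarrow \mathrm{Irr}(\mathbf{L}_1) \sqcup \mathrm{Irr}(\mathbf{L}_2)$; a short check on the componentwise order then shows there are no comparabilities across the two parts.

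Iterating these observations, $\mathbf{L}$ is irreducible iff $P$ is connected (in its covering-relation graph), and every irreducible decomposition of $\mathbf{L}$ arises as $\prod_i J(C_i)$, where $C_1, \ldots, C_k$ are the connected components of $P$. Since the decomposition of a finite poset into its connected components is manifestly unique up to reordering, the lemma follows: $n$ must equal $k$, and each irreducible factor $\mathbf{L}_i$ is forced to be $J(C_{\pi(i)})$ for some permutation $\pi$. The main obstacle I foresee is the careful verification of the two-way correspondence — pinning down the join-irreducibles of a product and ruling out cross-part comparabilities is routine but unavoidable. As an alternative bookkeeping device, Lemma~\ref{unique complement} can be used to encode decompositions by their tuples of central elements, whose atoms in the Boolean algebra of complemented elements of $\mathbf{L}$ single out precisely the connected components of $P$.
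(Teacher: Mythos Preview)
Your argument is correct: reducing via FTFDL to the poset $P$ of join-irreducibles, identifying direct-product decompositions of $J(P)$ with partitions of $P$ into unions of connected components, and then invoking the uniqueness of the connected-component decomposition is a clean and standard way to establish the lemma. The characterisation of join-irreducibles in a product and the absence of cross-part comparabilities are exactly as you describe.

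As for comparison with the paper: there is nothing to compare against. The paper does not prove this lemma; it is stated with a citation to Birkhoff~\cite{gb} and used as a black box in the proof of Theorem~\ref{iff}. Your proposal therefore supplies a self-contained proof where the paper simply quotes a reference. If anything, note that the paper does later state FTFDL (Theorem~\ref{ftfdl}) and works extensively with $J(\mathbf{P})$, so your approach is entirely in the spirit of the surrounding material; the alternative you mention via central elements and Lemma~\ref{unique complement} is closer to how Birkhoff himself organises the argument, but either route is fine here.
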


For  an FDL $\mathbf{L}$, its rank function \cite{ec} satisfies
$$\rho(x)+ \rho(y)= \rho(x\wedge y)+ \rho(x\vee y),$$ for any $x, y\in
\mathbf{L}$. For a pair of complementary elements $x$ and  $y$ of
$\mathbf L$, we have
$$\rho(x)+\rho(y)=
 \rho(\hat{0})+ \rho(\hat{1})= \rho(\hat{1})= \rho(\mathbf{L}).$$

\begin{Lemma}\label{sublattice}

Let $\mathbf{L}$ be an FDL of rank $k$ and $y$ the complement of
$x\in \mathbf{L}$. If $\rho(x)= r\geq 1$ and $\rho(y)= k-r\geq 1$,
then $\mathbf{L}$ has a sublattice
$(\mathbf{r+1})\times(\mathbf{k-r+1})$ containing $x$ and $y$.
\end{Lemma}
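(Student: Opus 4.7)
The plan is to construct the sublattice explicitly as a ``grid of joins'' built from saturated chains to $x$ and to $y$. I will first pick saturated chains $\hat{0}=x_{0}\lessdot x_{1}\lessdot\cdots\lessdot x_{r}=x$ and $\hat{0}=y_{0}\lessdot y_{1}\lessdot\cdots\lessdot y_{k-r}=y$ (these exist because an FDL is graded and $\rho$ is the length of such a saturated chain), and then set
$$a_{ij}:=x_{i}\vee y_{j}\qquad(0\le i\le r,\ 0\le j\le k-r).$$
The two given elements automatically sit in the candidate sublattice, since $x=a_{r,0}$ and $y=a_{0,k-r}$.

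The key preliminary observation I will use, leveraging the hypothesis that $y$ is the complement of $x$, is that for every $i,j$,
$$x_{i}\wedge y_{j}\le x\wedge y=\hat{0},$$
so $x_{i}\wedge y_{j}=\hat{0}$. With this in hand, closure of $\{a_{ij}\}$ under the operations of $\mathbf{L}$ becomes a routine distributive calculation. Joins are immediate: $a_{ij}\vee a_{i'j'}=x_{\max(i,i')}\vee y_{\max(j,j')}=a_{\max(i,i'),\max(j,j')}$. For meets, I expand by distributivity,
$$a_{ij}\wedge a_{i'j'}=(x_{i}\wedge x_{i'})\vee(x_{i}\wedge y_{j'})\vee(y_{j}\wedge x_{i'})\vee(y_{j}\wedge y_{j'}),$$
and the two mixed terms vanish by the preliminary observation, leaving $a_{\min(i,i'),\min(j,j')}$.

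To see that the $(r+1)(k-r+1)$ elements $a_{ij}$ are pairwise distinct, so that the obvious coordinate-wise map to $(\mathbf{r+1})\times(\mathbf{k-r+1})$ is a lattice isomorphism, I recover the indices by the same trick: $a_{ij}\wedge x=x_{i}\vee(y_{j}\wedge x)=x_{i}$, and symmetrically $a_{ij}\wedge y=y_{j}$. Hence $a_{ij}=a_{i'j'}$ forces $(i,j)=(i',j')$, which both establishes distinctness and identifies the sublattice with the required grid.

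No step looks genuinely dangerous here: the whole argument is essentially one distributive expansion married to the complement hypothesis. If anything, the item most worth double-checking is that the rank function on an FDL is realized by saturated chains from $\hat{0}$ of the advertised length, which is standard for graded lattices and is implicit in the discussion of $\rho$ just preceding the lemma.
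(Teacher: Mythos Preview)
Your proof is correct and follows essentially the same approach as the paper: both pick saturated chains to $x$ and $y$, observe $x_i\wedge y_j=\hat 0$, define $a_{ij}=x_i\vee y_j$, and verify closure under $\vee$ and $\wedge$ by the same distributive expansions. The only minor variation is in the distinctness argument: the paper shows $a_{i'j'}\preceq a_{ij}\Rightarrow i'\le i,\ j'\le j$ by meeting both sides with $y_{j'}$ (resp.\ $x_{i'}$), whereas you recover the indices directly via $a_{ij}\wedge x=x_i$ and $a_{ij}\wedge y=y_j$; both tricks are the same distributive computation in slightly different packaging.
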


\noindent{\em Proof.}  $\mathbf{L}$ has at least two saturated
chains between $\hat{0}$ with $x$ and $y$, respectively:
$$ P_{1}: \hat{0}=x_0\prec x_{1}\prec x_{2}\prec \cdots \prec
x_{r}= x,$$ and $$ P_{2}: \hat{0}=y_0\prec y_{1}\prec y_{2}\prec
\cdots \prec y_{k-r}= y.$$ Then $x_{i}\wedge y_{j}= \hat{0}$, for
any $0\leq i\leq r, 0\leq j\leq k-r$, since $x_{i}\wedge
y_{j}\preceq x\wedge y=\hat{0}$. Hence $P_1$ and $P_2$ have no
common elements except for $\hat{0}$.

Let $\mathbf{L}'= \{a_{ij}: a_{ij}= x_{i}\vee y_{j}, 0\leq i\leq r,
0\leq j\leq k-r\}$. Then $\mathbf{L}'$ satisfies the following three
properties:
\begin{enumerate}
\item $a_{ij}=a_{i'j'}$ if and only if $i=i'$ and $j=j'$.  If $a_{i'j'}\preceq
a_{ij}$, i.e. $x_{i'}\vee y_{j'}\preceq x_{i}\vee y_{j}$, then
$(x_{i'}\vee y_{j'})\wedge y_{j'}\preceq (x_{i}\vee y_{j})\wedge
y_{j'}$. By distributive laws and $x_{i}\wedge y_{j}= \hat{0}$, we
have that  $y_{j'}\preceq y_{j}\wedge y_{j'}$ and $j'\leq j$.
Similarly we have $i'\leq i$. So the property holds.

\item $\mathbf{L}'$ forms a sublattice of $\mathbf{L}$
and $\mathbf{L}'= \langle x_{1}, \cdots, x_{r}, y_{1}, \cdots,
y_{k-r}; \vee, \wedge\rangle$. It suffices to show that
$\mathbf{L}'$ is closed under meet and join operations $\wedge$ and
$\vee$ of $\mathbf{L}$.



(i) $a_{ij}\vee a_{i'j'}= (x_{i}\vee y_{j})\vee (x_{i'}\vee y_{j'})=
(x_{i}\vee x_{i'})\vee (y_{j}\vee y_{j'})= x_{i''}\vee y_{j''}=
a_{i''j''}\in \mathbf{L}'$, where $i''= \max\{i, i'\}, j''= \max\{j,
j'\}$;

(ii) $$\begin{array}{rll} a_{ij}\wedge a_{i'j'}&=& a_{ij} \wedge
(x_{i'}\vee y_{j'})= (a_{ij}\wedge x_{i'})\vee (a_{ij}\wedge
y_{j'})\\
&=&((x_{i}\vee y_{j})\wedge x_{i'})\vee ((x_{i}\vee y_{j})\wedge
y_{j'})\\
&=&((x_{i}\wedge x_{i'})\vee (y_{j}\wedge x_{i'}))\vee ((x_{i}\wedge
y_{j'})\vee ( y_{j}\wedge y_{j'}))\\
&=&((x_{i}\wedge x_{i'})\vee \hat{0})\vee (\hat{0} \vee
(y_{j}\wedge y_{j'}))\\
 &= &x_{i'''}\vee y_{j'''}=
a_{i'''j'''}\in \mathbf{L}',\end{array}$$

where $i'''= \min\{i, i'\}, j'''= \min\{j, j'\}$.


\item 
$\mathbf{L}'$ is isomorphic to
$(\mathbf{r+1})\times(\mathbf{k-r+1})$. Let $\phi:
\mathbf{L}'\rightarrow (\mathbf{r+1})\times(\mathbf{k-r+1})$ be a
bijection  as $\phi(a_{ij})=(i,j)$ for any $a_{ij}\in \mathbf L'$.
Then by Property 2(i) and (ii), we have that
$$\phi(a_{ij}\vee a_{i'j'})=\phi( a_{i''j''})=(i'',j'')= (i,j)\vee (i',j')=\phi(a_{ij})\vee \phi(a_{i'j'}),$$ and
$$\phi(a_{ij}\wedge a_{i'j'})=\phi( a_{i'''j'''})=(i''',j''')=(i,j)\wedge
(i',j')= \phi(a_{ij})\wedge \phi( a_{i'j'}).$$ Hence
$\mathbf{L}'\cong (\mathbf{r+1})\times(\mathbf{k-r+1})$. \hfill
$\square$
\end{enumerate}

\section{Some fundamental results on MDL}

Let $G$ be a plane bipartite graph with a specific proper
black-white coloring to vertices. An edge $e$ of a cycle (or an
inner face) $C$ is {\em proper} if $e$  goes from the white
end-vertex  to the black endvertex  along the clockwise direction of
$C$. Let $\cal F(G)$ denote the set of all inner faces of $G$.
Recall that ${\cal M}(G)$ denotes the set of all 1-factors of $G$.
\vskip 2mm

\noindent{\bf Definition 1}. A binary relation $\preceq$ on ${\cal
M}(G)$ is defined as: $M_1\preceq M_2$,  $M_1,M_2\in {\cal M}(G)$,
if and only if $\vec Z(G)$ has a directed path from $M_2$ to $M_1$.
\vskip 2mm

 It is known that  $\mathbf M(G)=(\cal
M(G),\preceq)$ is a poset and a lattice structure on $\cal M(G)$ is
revealed in the following two theorems.

\begin{thm}\cite{lz}\label{webg}
Let $G$ be a plane (weakly) elementary bipartite graph. Then
$\mathbf{M}(G)$ is a finite distributive lattice, and its Hasse
diagram is isomorphic to $\vec{Z}(G)$.
\end{thm}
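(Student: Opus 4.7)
The plan is to verify, in order: (i) $\preceq$ is a partial order on $\mathcal M(G)$, (ii) the resulting poset $\mathbf M(G)$ admits binary joins and meets and is distributive, and (iii) its Hasse diagram agrees with $\vec Z(G)$. Step (i) is short: reflexivity by the empty directed path, transitivity by concatenation of directed paths in $\vec Z(G)$, and antisymmetry by the acyclicity of $\vec Z(G)$ recorded in [zz4].

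The substantive work is in step (ii), which I would carry out through a height function in the spirit of Thurston and R\'emila. Fix a reference matching $M_0\in\mathcal M(G)$. For any $M\in\mathcal M(G)$, the symmetric difference $M\oplus M_0$ is a disjoint union of alternating cycles, and the weakly elementary hypothesis guarantees that each such cycle together with its interior forms an elementary subgraph (so in particular carries a perfect matching). Define $h_M:\mathcal F(G)\to\mathbb Z$ by setting $h_M(f)$ equal to the number of $M_0$-proper cycles in $M\oplus M_0$ enclosing $f$, minus the number of $M_0$-improper ones. I would then establish two claims. First, $M_1\preceq M_2$ if and only if $h_{M_1}\le h_{M_2}$ pointwise on $\mathcal F(G)$: each directed edge of $\vec Z(G)$ flips a single proper alternating face and so changes $h$ by exactly $+1$ at that face (and nowhere else), giving one direction, while the converse is an induction that, whenever $h_{M_1}<h_{M_2}$ somewhere, locates a proper alternating face of $M_1$ inside the discrepancy region whose flip strictly decreases the discrepancy. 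Second, for any two matchings $M_1,M_2$, the pointwise functions $\max(h_{M_1},h_{M_2})$ and $\min(h_{M_1},h_{M_2})$ are themselves realized as $h_M$ for genuine matchings, which must then be the join and meet. These matchings are constructed by regrouping the alternating cycles of $M_1\oplus M_0$ and $M_2\oplus M_0$ into nested ``outermost'' and ``innermost'' systems and filling the newly enclosed interior regions with the perfect matchings supplied by weak elementarity. Once joins and meets exist, $M\mapsto h_M$ is a lattice embedding of $\mathbf M(G)$ into $\mathbb Z^{\mathcal F(G)}$, and distributivity of the image is inherited from that of $(\mathbb Z,\max,\min)$.

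Step (iii) is then immediate: a covering relation in $\mathbf M(G)$ forces $h_{M_2}-h_{M_1}$ to be an integer indicator of a single inner face, which by the definition of the orientation is precisely the data of an edge of $\vec Z(G)$; conversely every edge of $\vec Z(G)$ is a covering by definition. The main obstacle throughout is the realization step in (ii), namely producing an actual 1-factor whose height is $\max(h_{M_1},h_{M_2})$. This is exactly where weak elementarity is indispensable: without it some interior region produced by the cycle rearrangement could fail to carry a perfect matching, and the candidate join would not correspond to any element of $\mathcal M(G)$.
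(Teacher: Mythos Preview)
The paper does not prove this theorem; it is quoted from Lam and Zhang~[lz], whose argument proceeds via the \emph{unit decomposition} of alternating cycle systems rather than via height functions. Your height-function framework is close in spirit to the invariant $\Delta_{\mathcal C}(f)$ that the present paper does exploit later (Lemma~\ref{delta}), and steps (i) and (iii) of your outline are fine. The substantive gap is in step (ii), specifically the construction of the join.

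You propose to build $M_1\vee M_2$ by ``regrouping the alternating cycles of $M_1\oplus M_0$ and $M_2\oplus M_0$ into nested outermost and innermost systems and filling the newly enclosed interior regions with the perfect matchings supplied by weak elementarity.'' This is where the argument breaks down. The two cycle systems $M_1\oplus M_0$ and $M_2\oplus M_0$ may cross each other arbitrarily, so ``regrouping'' them into a single nested family is not a well-defined operation, and the resulting ``interior regions'' need not be bounded by a cycle that is alternating with respect to any fixed 1-factor. Weak elementarity only guarantees that an \emph{$M$-alternating} cycle together with its interior is elementary; it does not furnish perfect matchings for arbitrary subregions carved out by overlaying two unrelated cycle systems. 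The route actually taken in~[lz], and the one that repairs your outline, is to work with $M_1\oplus M_2$ directly: its cycles are pairwise disjoint and each is simultaneously $M_1$- and $M_2$-alternating, so one may set $M_1\vee M_2:=M_1\oplus S$ where $S$ is the union of the improper-$M_1$ (equivalently proper-$M_2$) cycles in $M_1\oplus M_2$. This is an honest 1-factor with no filling step required, and one then checks that its height function is $\max(h_{M_1},h_{M_2})$. With that replacement your scheme goes through; as written, the realization of $\max(h_{M_1},h_{M_2})$ as the height of an actual matching is not established.
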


\begin{thm}\cite{zh1}\label{nwebg}
Let $G$ be a plane bipartite graph with 1-factor.   Then $\mathbf
M(G)$ is direct sum of distributive lattices and the Hasse diagram
is isomorphic to $\vec Z(G)$.
\end{thm}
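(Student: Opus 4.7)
\noindent\emph{Proof plan.} The strategy is to split $\mathbf M(G)$ along the connected components of $Z(G)$ and reduce each piece to the weakly elementary case already handled by Theorem~\ref{webg}. By Definition~1, $M_1\preceq M_2$ requires a directed path in $\vec Z(G)$ from $M_2$ to $M_1$, so any two comparable 1-factors must lie in the same connected component of $Z(G)$. Conversely, every arc $M_1\to M_2$ of $\vec Z(G)$ is already a cover relation in $\mathbf M(G)$, because the symmetric difference $M_1\oplus M_2$ is a single inner-face boundary and cannot be refined through intermediate 1-factors. Hence $\mathbf M(G)$ is automatically the poset direct sum of the sub-posets it induces on the connected components of $\vec Z(G)$, and its Hasse diagram coincides with $\vec Z(G)$. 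The theorem thus reduces to showing that each such sub-poset is a finite distributive lattice.

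Fix a connected component $C$ of $Z(G)$, and let $G_C$ be the plane subgraph of $G$ whose edges are exactly those that appear in at least one member of $C$ (edges lying in every member of $C$ play the role of forced matched pairs and contribute only trivial factors). Then $\mathcal M(G_C)=C$ as a set, every edge of $G_C$ lies in some 1-factor of $G_C$, and one checks that $\vec Z(G_C)$ is the restriction of $\vec Z(G)$ to $C$---any rotation used inside $C$ is around an inner face of $G$ whose boundary survives in $G_C$, and conversely. The central step is to show that $G_C$ is weakly elementary: for any $M\in C$, every $M$-alternating cycle together with its interior spans an elementary subgraph of $G_C$. To see this I would take such a cycle and apply the alternating-cycle decomposition of Lam--Zhang~\cite{lz}, writing it as a symmetric difference of proper $M$-alternating face boundaries; each face in the decomposition corresponds to a Z-transformation that stays inside $C$, so every edge and vertex interior to the cycle is covered by some 1-factor in $C$, which is precisely the elementarity requirement.

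Once $G_C$ is known to be weakly elementary, Theorem~\ref{webg} applied to $G_C$ yields that $(C,\preceq)$ is a finite distributive lattice whose Hasse diagram is $\vec Z(G_C)$, i.e.\ the restriction of $\vec Z(G)$ to $C$. Assembling these conclusions over all connected components of $Z(G)$ gives the asserted direct-sum decomposition of $\mathbf M(G)$, with overall Hasse diagram $\vec Z(G)$. The main obstacle I expect is the weak-elementarity verification for $G_C$: one must argue that passing to a single Z-transformation orbit kills exactly the alternating cycles whose interior is not tileable by 1-factors of $G_C$, and that the inherited planar face structure is still compatible with the inner-face rotations used by Theorem~\ref{webg}; everything else is either bookkeeping or a direct appeal to the already stated theorems and lemmas.
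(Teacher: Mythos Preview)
This theorem is quoted in the paper from \cite{zh1} without proof, so there is no in-paper argument to compare against; I can only assess your proposal on its own merits.

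Your reduction has a genuine gap at the passage from a connected component $C$ of $Z(G)$ to a plane bipartite graph $G_C$. Two intertwined problems arise. First, deleting from $G$ the edges not used by any $M\in C$ changes the face structure: inner faces of $G$ can merge in $G_C$, so an inner face of $G_C$ need not be an inner face of $G$. Your claimed identification ``$\vec Z(G_C)$ is the restriction of $\vec Z(G)$ to $C$'' therefore requires justification in both directions, and the ``conversely'' direction can fail outright: a rotation around a merged face of $G_C$ is not a single step in $\vec Z(G)$. Second, the assertion $\mathcal M(G_C)=C$ is not established. Every $M\in C$ is certainly a 1-factor of $G_C$, but you give no reason why $G_C$ cannot admit additional 1-factors lying in other components of $Z(G)$ (the edges you deleted need not be forbidden edges of $G$; they may appear in matchings from other components).

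Your weak-elementarity argument compounds the issue. You invoke the Lam--Zhang unit decomposition to write an $M$-alternating cycle as a symmetric difference of face boundaries, but that decomposition in \cite{lz} is developed for (weakly) elementary plane bipartite graphs and uses the face structure of the ambient graph; applying it to $G_C$ to \emph{prove} weak elementarity is circular, and applying it in $G$ gives faces of $G$, not of $G_C$, with no guarantee that the corresponding rotations remain in $C$. The actual argument in \cite{zh1} does not proceed by manufacturing an auxiliary graph $G_C$ component-by-component; it analyses directly, via height-type invariants and the unit decomposition relative to the original face structure of $G$, why each $Z(G)$-component already carries a distributive lattice order. If you want to salvage your route, you would need an independent lemma characterising the components of $Z(G)$ intrinsically (e.g.\ via the non-elementary regions that obstruct weak elementarity) before any appeal to Theorem~\ref{webg}.
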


\noindent{\bf Definition 2}. An FDL $\mathbf{L}$ is called an {\em
matchable distributive lattice} (MDL) if there exist a plane
bipartite graph $G$ such that  $\mathbf{L} \cong \mathbf{M}(G)$.
\vskip 2mm

Let $M^{\hat{1}}$ and $M^{\hat{0}}$ denote 1-factors of $G$ such
that $G$ has neither  improper $M^{\hat{1}}$- nor proper
$M^{\hat{0}}$-alternating cycles, called {\em source} and {\em root}
1-factors of $G$ respectively. If $\mathbf M(G)$ is an FDL, then
$M^{\hat{1}}$ and $M^{\hat{0}}$ are the greatest element and the
least element, respectively.

\begin{Lemma} \label{altbound}
Let $G$ be a plane elementary bipartite graph with more than two
vertices. Then the boundary of $G$ is proper $M^{\hat{1}}$- and
improper $M^{\hat{0}}$-alternating cycle.
\end{Lemma}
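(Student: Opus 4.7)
The plan is to prove the two halves of the lemma separately by symmetric arguments; I focus on the statement about $M^{\hat{1}}$, the $M^{\hat{0}}$-case being identical with the roles of ``proper'' and ``improper'' interchanged. Three steps are needed: (i) the outer face of $G$ is bounded by a single cycle $C$; (ii) $C$ is $M^{\hat{1}}$-alternating; (iii) $C$ is in fact proper $M^{\hat{1}}$-alternating. Step (i) follows because a plane elementary bipartite graph with more than two vertices is 2-connected: a cut-vertex $v$ would leave some component of $G-v$ with an unbalanced bipartition and thereby force an edge at $v$ to lie in no 1-factor, contradicting elementarity. Hence every face boundary of $G$, including the outer one, is a single cycle. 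Step (iii) is then immediate once step (ii) is established: by the very definition of the source matching, $G$ admits no improper $M^{\hat{1}}$-alternating cycle whatsoever, so the $M^{\hat{1}}$-alternating cycle $C$ is automatically proper.

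The main obstacle is step (ii), showing $C$ is $M^{\hat{1}}$-alternating. I would argue by contradiction. Assume the set $V_{1}\subseteq V(C)$ of boundary vertices whose $M^{\hat{1}}$-mate lies in the interior of $G$ is non-empty. A parity count at the vertices of $C$ shows that $|V_{1}|$ is even and that between any two cyclically consecutive vertices of $V_{1}$ on $C$ there is an $M^{\hat{1}}$-alternating arc of odd length whose two terminal edges are not in $M^{\hat{1}}$. The aim is then to combine such a boundary arc with an $M^{\hat{1}}$-alternating path in the interior of $G$ joining the interior $M^{\hat{1}}$-partners of the two boundary endpoints, producing an $M^{\hat{1}}$-alternating cycle in $G$ which, by an appropriate choice of the interior path, is improper; this will contradict the defining property of $M^{\hat{1}}$.

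The delicate point is producing the required interior alternating path: it must be obtained from elementarity (every edge lies in some 1-factor, which lets alternating paths propagate through $G$) while using planarity of the embedding to ensure that closing up with the chosen boundary arc gives a simple cycle whose orientation relative to $M^{\hat{1}}$ can be controlled. A possibly cleaner alternative is induction on the number of interior faces via an ear-decomposition of plane elementary bipartite graphs, choosing the ear so that its removal preserves elementarity and the pertinent features of the outer boundary; the base case is a single even cycle, for which the statement is obvious. Either way, once step (ii) is cleared, steps (i) and (iii) complete the $M^{\hat{1}}$-part, and the $M^{\hat{0}}$-part is obtained by interchanging ``proper'' and ``improper'' throughout.
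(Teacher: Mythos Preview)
Your outline correctly isolates the logical structure, and steps (i) and (iii) are fine. The problem is step (ii): you recognise it as the crux, but neither of the two routes you sketch is actually carried out, and both are considerably harder than necessary. Producing an interior $M^{\hat 1}$-alternating path between two prescribed interior vertices, with controlled orientation, is not something elementarity hands you directly; and the ear-decomposition induction would require checking that removing an ear preserves the source matching on the boundary, which needs its own argument. As written, the proof is incomplete at exactly the point that matters.

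The paper bypasses the separation of (ii) and (iii) entirely with a one-line trick you are missing: it never tries to show the whole boundary is $M^{\hat 1}$-alternating first. Instead, fix a single proper edge $e$ on the boundary and suppose $e\notin M^{\hat 1}$. Elementarity gives some 1-factor $M$ with $e\in M$. Now look at $M\oplus M^{\hat 1}$: its cycle $C$ through $e$ is automatically $M^{\hat 1}$-alternating (symmetric differences of two 1-factors always are), so no interior-path construction is needed. Because $e$ lies on the outer boundary, the interior of $C$ lies inside $G$, so the clockwise orientation of $C$ agrees with that of the boundary at $e$; hence $e\in M$ goes white-to-black clockwise on $C$, making $C$ proper $M$-alternating and therefore improper $M^{\hat 1}$-alternating --- contradicting the defining property of $M^{\hat 1}$. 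Thus every proper boundary edge lies in $M^{\hat 1}$, and the boundary is proper $M^{\hat 1}$-alternating in one stroke. The $M^{\hat 0}$-case is symmetric. The moral: when you need an alternating cycle, reach for a symmetric difference of matchings rather than building one by hand.
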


\begin{proof}  It is known that $G$ is 2-connected and the boundary is a cycle. For every
proper edge $e=uv$ on the boundary of $G$,  it suffices to show that
$e\in M^{\hat{1}}$. Otherwise, an edge $e'$ different from $e$ and
incident to $u$ belongs to $M^{\hat{1}}$. Since $G$ is elementary,
it has a 1-factor $M$ such that $e\in M$. Then $M\oplus M^{\hat{1}}$
has a cycle containing $e$ and $e'$, which is  both improper
$M^{\hat{1}}$- and proper $M$-alternating cycle, a contradiction.
Hence the boundary of $G$ is proper $M^{\hat{1}}$-alternating cycle.
Similarly, we can show that the boundary of $G$ is improper
$M^{\hat{0}}$-alternating cycle.
\end{proof}

 Let $G$ be a plane elementary bipartite graph with  $M'\preceq M$ in $\mathbf M(G)$.  For any $f\in \cal F(G)$, let $\Delta_{\cal C}(f)$ denote the number
 of proper $M$-alternating cycles in $\cal C:=\cal C(M,M')=M\oplus M'$ with $f$ in their
 interiors minus the number
 of improper $M$-alternating cycles in $\cal C$ with $f$ in their
 interiors. Then  $M'\preceq M$ implies that $\Delta_{\cal C} (f)\geq 0$ by Lemma 3.1 in \cite{zh1}. For any directed  path $\vec P= M_0(= M)M_1 ...M_t(= M')$ from $M$ to $M'$ of $\vec Z(G)$, let $s_i := M_{i-1}\oplus M_i$, $i =1, ..., t-1$. Let $\delta_P(f)$ denote
the times of $f$ appearing in the face sequence corresponding to
$s_1, ..., s_t$. Lemma 3.5 in Ref. \cite{zh1} implies the following
result.

\begin{Lemma} \label{delta}Let G be a plane elementary bipartite graph with  $M'\preceq M$ in $\mathbf M(G)$. If $\vec P$ is a directed path from $M$ to $M'$ of $\vec Z(G)$ and $\cal C=M\oplus M'$, then
$\delta_P (f) = \Delta_{\cal C}(f)$ for each $f\in \cal F(G)$.

\end{Lemma}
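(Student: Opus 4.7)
My plan is a straightforward induction on the length $t$ of the directed path $\vec{P}$. The base case $t=0$ is trivial: then $M=M'$, $\mathcal{C}=\emptyset$, and $\delta_P(f)=0=\Delta_{\mathcal{C}}(f)$ for every $f\in\mathcal{F}(G)$.

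For the inductive step, write $\vec{P}: M=M_0, M_1,\ldots, M_t=M'$, let $h\in\mathcal{F}(G)$ be the inner face whose boundary equals $s_1=M_0\oplus M_1$ (so $\partial h$ is proper $M$-alternating and improper $M_1$-alternating), and set $\vec{P}'=M_1,\ldots, M_t$ together with $\mathcal{C}'=M_1\oplus M'$. The key bookkeeping identity is
$$\mathcal{C}=M\oplus M'=(M_0\oplus M_1)\oplus(M_1\oplus M')=s_1\oplus\mathcal{C}'.$$
The inductive hypothesis applied to $\vec{P}'$ yields $\delta_{P'}(f)=\Delta_{\mathcal{C}'}(f)$ for every $f$. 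On the $\delta$-side, the face sequence of $\vec{P}$ differs from that of $\vec{P}'$ only by one extra occurrence of $h$ at the front; hence $\delta_P(h)=\delta_{P'}(h)+1$ and $\delta_P(f)=\delta_{P'}(f)$ for $f\neq h$.

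It therefore suffices to establish the one-step identity
$$\Delta_{\mathcal{C}}(h)=\Delta_{\mathcal{C}'}(h)+1, \qquad \Delta_{\mathcal{C}}(f)=\Delta_{\mathcal{C}'}(f) \text{ for } f\neq h.$$
This is exactly the content of Lemma 3.5 in \cite{zh1}: symmetric-differencing an alternating cycle system with the boundary of a single downward rotation changes the signed cycle count $\Delta$ at the rotated face by $+1$ and leaves every other face's count unchanged. Invoking it closes the induction.

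The main obstacle is this one-step identity. Intuitively, cycles of $\mathcal{C}'$ disjoint from $\partial h$ contribute identically to $\Delta_{\mathcal{C}}$ and $\Delta_{\mathcal{C}'}$, while cycles of $\mathcal{C}'$ that share edges with $\partial h$ can merge, split, or flip proper/improper type when XORed with $\partial h$; their net contribution cancels on every face except $h$, at which the signed count increments by exactly one because the rotation $M\to M_1$ goes downward in $\vec{Z}(G)$. Since this cancellation is precisely what Lemma 3.5 of \cite{zh1} verifies, my role here is only to phrase the induction correctly and apply that lemma once per edge of $\vec{P}$.
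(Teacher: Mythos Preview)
Your proposal is correct and follows essentially the same route as the paper: the paper gives no self-contained argument but simply records that the lemma is implied by Lemma~3.5 of \cite{zh1}, and your write-up just makes that implication explicit via induction on the length of $\vec P$, invoking the cited lemma for the one-step change. The only caveat is that you assert the one-step identity is ``exactly the content'' of Lemma~3.5 in \cite{zh1}; whether that lemma is stated at the one-step level or already at the level of full paths, your induction goes through either way, so this is a matter of phrasing rather than a gap.
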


 From Lemmas \ref{sublattice}  and \ref{altbound}
we can derive the following critical result.

\begin{Lemma}\label{no complement}
For a plane elementary bipartite graph $G$ with more than two
vertices, each element of $\mathbf{M}(G)$ has no complement except
the greatest element $M^{\hat{1}}$ and the least element
$M^{\hat{0}}$.
\end{Lemma}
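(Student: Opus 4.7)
The plan is to argue by contradiction. Suppose some $M \in \mathbf{M}(G) \setminus \{M^{\hat{0}}, M^{\hat{1}}\}$ has a complement $M'$; set $k=\rho(\mathbf{M}(G))$ and $r=\rho(M)$, so $\rho(M')=k-r$ and $1\leq r\leq k-1$. Then Lemma \ref{sublattice} produces a sublattice $\mathbf{L}'\cong(\mathbf{r+1})\times(\mathbf{k-r+1})$ of $\mathbf{M}(G)$ whose four corners are $M^{\hat 0}, M, M', M^{\hat 1}$.

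My first step is to read off the face labels on the covers of $\mathbf{L}'$. In any distributive lattice the two parallel edges of a Hasse-diagram square carry the same join-irreducible label, and in $\vec Z(G)$ each cover is labeled by the inner face it flips; propagating this along rows and columns of $\mathbf{L}'$, all horizontal covers in column $i$ flip a common face $f_i$, and all vertical covers in row $j$ flip a common face $g_j$. Applying Lemma \ref{delta} to the directed paths in $\vec Z(G)$ that arise from chains in $\mathbf{L}'$, and to their sub-paths, then yields as multisets of inner faces
$$\Delta_{\mathcal{C}(M^{\hat{1}},M)}=\{g_1,\ldots,g_{k-r}\}, \qquad \Delta_{\mathcal{C}(M,M^{\hat{0}})}=\{f_1,\ldots,f_r\},$$
with the symmetric identities $\Delta_{\mathcal{C}(M^{\hat{1}},M')}=\{f_1,\ldots,f_r\}$ and $\Delta_{\mathcal{C}(M',M^{\hat{0}})}=\{g_1,\ldots,g_{k-r}\}$.

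Next I invoke Lemma \ref{altbound}: the boundary $\partial G$ is a proper $M^{\hat 1}$-alternating cycle, hence a connected component of the edge set $M^{\hat{1}}\oplus M^{\hat{0}}$, and its interior contains every inner face of $G$. Consequently $\Delta_{\mathcal{C}(M^{\hat{1}},M^{\hat{0}})}(f)\geq 1$ for every $f\in \mathcal{F}(G)$, so the multiset union $\{f_1,\ldots,f_r\}\cup\{g_1,\ldots,g_{k-r}\}$ covers each inner face at least once.

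To close the contradiction I would trace $\partial G$ through the decomposition $M^{\hat{1}}\oplus M^{\hat{0}}=(M^{\hat{1}}\oplus M)\oplus(M\oplus M^{\hat{0}})$: each boundary edge lies in exactly one of the two right-hand symmetric differences. A careful local analysis at each boundary vertex, distinguishing those that $M$ matches along a boundary edge from those it matches by a chord, should show that $\partial G$ is an entire cycle of exactly one of $M^{\hat{1}}\oplus M$ or $M\oplus M^{\hat{0}}$; in each case the face-multiset identities above force $\{g_j\}\supseteq \mathcal{F}(G)$ or $\{f_i\}\supseteq\mathcal{F}(G)$. Combining this with the analogous dichotomy applied via $M'$, and with the fact that $\mathcal{F}(G)$ cannot simultaneously be absorbed into both the $M$-direction and the $M'$-direction without collapsing $r$ or $k-r$ to zero, finally forces $M\in\{M^{\hat 0},M^{\hat 1}\}$, contradicting the choice of $M$. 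The main obstacle is precisely this last step, namely converting the global single-cycle nature of $\partial G$ into a rigid local constraint on $M$; I expect it to require careful case analysis at boundary vertices, together with full use of the 2-connectedness of $G$ guaranteed by elementarity.
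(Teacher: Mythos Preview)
Your setup through the covering statement $\{f_1,\ldots,f_r\}\cup\{g_1,\ldots,g_{k-r}\}\supseteq\mathcal{F}(G)$ is correct and coincides with the paper's argument (its Claims~1 and~2 arrive at the same point via the same Lemmas~\ref{sublattice}, \ref{altbound}, \ref{delta}). The gap is entirely in your closing step. The assertion that $\partial G$ must lie wholly inside one of $M^{\hat{1}}\oplus M$ or $M\oplus M^{\hat{0}}$ is not proved, and the ``local analysis at boundary vertices'' you sketch does not obviously yield it; you explicitly flag this as the main obstacle, and I do not see a clean way to carry it out. Even if the dichotomy held, the final ``collapse'' paragraph is vague: you never explain why $\{g_j\}\supseteq\mathcal{F}(G)$ together with the analogous statement for $M'$ forces $r$ or $k-r$ to vanish.

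The paper finishes with a much simpler idea that you are one observation away from. You have recorded that the horizontal covers flip $f_i$ and the vertical covers flip $g_j$, but you have not used that for every pair $(i,j)$ the facial cycles $\partial f_i$ and $\partial g_j$ are \emph{disjoint}. This is immediate once you note that at the grid element $M_{i-1,j-1}:=M_{i-1}\vee M'_{j-1}$ both $\partial f_i$ and $\partial g_j$ are improper $M_{i-1,j-1}$-alternating facial cycles; two adjacent inner faces cannot both be improper $M$-alternating for the same $M$, since a shared edge is proper in one face and improper in the other. Now combine this disjointness with your covering property and with the connectedness of the inner dual $G^\#$ (which follows from the $2$-connectedness of $G$): some $f_i$ must be adjacent to some $g_j$ in $G^\#$, i.e.\ their boundaries share an edge, a contradiction. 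This replaces the entire boundary-tracing paragraph.
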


\setlength{\unitlength}{1cm}

\begin{figure}[h]
\refstepcounter{figure} \label{fig02}
\begin{center}
\includegraphics[scale=0.7]{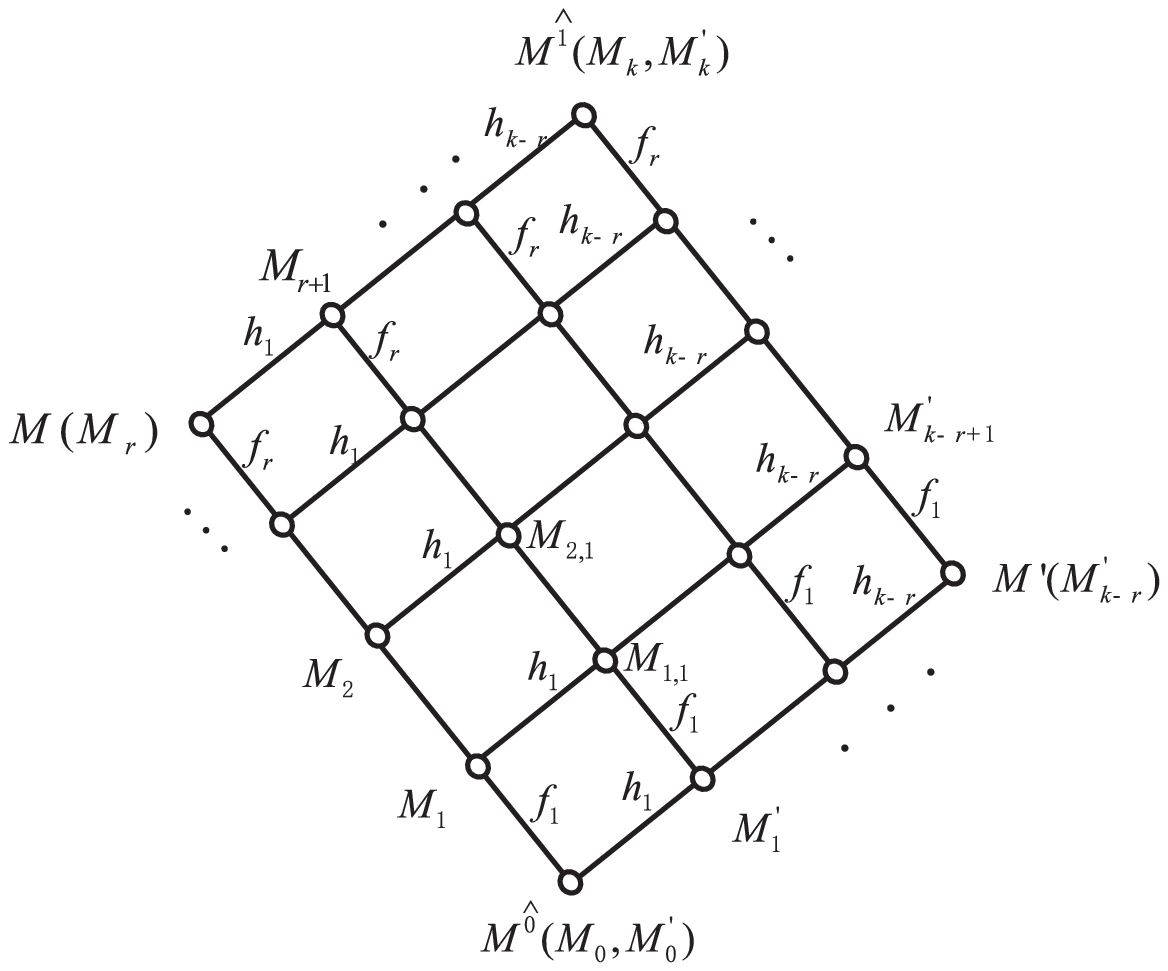}\\
Figure 2. Sublattice $(\mathbf{r+1})\times(\mathbf{k-r+1})$.
\end{center}
\end{figure}

\begin{proof}  Suppose to the contrary that $\mathbf{M}(G)$
has a pair of mutually complementary  elements $M$ and $M'$ except
$M^{\hat{1}}$ and $M^{\hat{0}}$. Let $\rho(\mathbf{M}(G))= k$ and
$\rho(M)= r$. Then $\rho(M')=k-r$, and $k-1\geq r\geq 1$. By Lemma
\ref{sublattice}, $\mathbf{M}(G)$ has a sublattice
$(\mathbf{r+1})\times(\mathbf{k-r+1})$ as shown in Fig.  2
containing the following two maximal chains:
$$M_{0}(=M^{\hat{0}})\prec M_{1}\prec \cdots \prec M_{r}(=M)\prec \cdots\prec
M_{k-1}\prec M_{k}(=M^{\hat{1}}),\mbox{ and}$$ $$ M'_{0}(=
M^{\hat{0}})\prec M'_{1}\prec \cdots \prec M'_{k-r}(=M')\prec
\cdots\prec M'_{k-1}\prec M'_{k}(=M^{\hat{1}}).$$

\noindent Put $M_{ij}:= M_i\vee M'_{j},\ i=0,1,\ldots,r, \,
j=0,1,\ldots,k-r,$ $s_{i}:=M_{i}\oplus M_{i-1} (1\leq i\leq r)$ and
$s'_{j}:=M'_{j}\oplus M'_{j-1} (1\leq j\leq k-r)$. Since each
maximal chain of $(\mathbf{r+1})\times(\mathbf{k-r+1})$ is a
saturated chain of $\mathbf M(G)$, the $s_{i}$ and $s'_{j}$ are the
boundaries of inner faces of $G$.\vskip 2mm

\noindent{\bf Claim 1}.  $M_{i,j}=M_{i-1,j}\oplus
s_{i}=M_{i,j-1}\oplus s'_{j}$, and $s_i$ and $s_j'$ are disjoint,
for $ i=1,2,\ldots,r$, and $j=1,2,\ldots,k-r$.
\begin{proof}

We prove that $M_{i,j}=M_{i-1,j}\oplus s_{i}=M_{i,j-1}\oplus s'_{j}$
such that $s_i$ and $s_j'$ are proper $M_{i,j}$-alternating by
induction on $(i,j)\geq (1,1)$. For $i=j=1$, $s_1$ and $s_1'$ are
improper $M^{\hat 0}$-alternating facial cycles, and are thus
disjoint.  Hence we have that $M_{1,1}=M_1\oplus s_1'=M_1'\oplus
s_1$ by Lemma \ref{sublattice}, and the required holds. Let $i\geq
2$ or $j\geq 2$. For the induction step, suppose that the assertion
holds for smaller $i$ or $j$.  By induction hypothesis,
$M_{i,j-1}=M_{i-1,j-1}\oplus s_i$ and $M_{i-1,j}=M_{i-1,j-1}\oplus
s'_j$, and  $s_i$ and $s'_j$ are distinct and improper
$M_{i-1,j-1}$-alternating facial cycles, and are disjoint. Hence
$s_i$ and $s'_j$ are improper $M_{i,j-1}$-alternating  and improper
$M_{i-1,j}$-alternating, respectively, and  $M_{i,j-1}\oplus s_j'$
and $M_{i-1,j}\oplus s_i$ cover $M_{i,j-1}$ and  $M_{i-1,j}$,
respectively. Obviously, $M_{i,j-1}\oplus s'_{j}=M_{i-1,j-1}\oplus
s_i\oplus s'_j=M_{i-1,j-1}\oplus s'_j\oplus s_i=M_{i-1,j}\oplus
s_i$. Hence $M_{i,j}=M_{i-1,j}\oplus s_i=M_{i,j-1}\oplus s'_{j}$
since $M_{i,j}=M_{i-1,j}\vee M_{i,j-1}$ by Lemma \ref{sublattice}.
The assertion holds for any $(i,j)$.\end{proof}

\noindent{\bf Claim 2.} Let $f_{i}$ and $h_{j}$ denote the inner
faces of $G$ bounded by $s_i$ and $s_j'$, respectively. Then $\cal
F(G)=\{f_{1}, f_{2}, \ldots, f_{r}, h_{1}, h_{2}, \ldots,
h_{k-r}\}$.

\begin{proof}  Let $\mathcal{F}_{1}:=\{f_{1},
f_{2}, \cdots, f_{r}\}$ and $ \mathcal{F}_{2}:=\{h_{1}, h_{2},
\cdots, h_{k-r}\}$.  So we want to prove that
$\mathcal{F}(G)=\mathcal{F}_{1} \cup \mathcal{F}_{2}$.

Let ${\cal C}:=M^{\hat{1}}\oplus M^{\hat{0}}$. Then each cycle in
${\cal C}$ is proper $M^{\hat{1}}$ and improper
$M^{\hat{0}}$-alternating cycle, one being the boundary of $G$ by
Lemma \ref{altbound}.  Hence $\Delta_{\cal C}(f)\geq 1$ for any
$f\in \cal F$.

Let $P:=M_{r,k-r}(=M^{\hat{1}})M_{r,k-r-1}\cdots
M_{r,0}M_{r-1,0}\cdots M_{0,0}(=M^{\hat{0}})$ be a directed path of
$\vec Z(G)$, corresponding to  a maximal chain of $\mathbf M(G)$.
For any $f\in \cal F$, by Lemma \ref{delta} we have that
$\delta_P(f)=\Delta_{\cal C}(f)\geq 1$. Hence $\cal F(G)=\{f_{1},
f_{2}, \ldots, f_{r}, h_{1}, h_{2}, \ldots, h_{k-r}\}$.
\end{proof}

 Since $G$ is  2-connected,  inner dual graph $G^\#$ of $G$ is
 connected. Let $f^*$ be a vertex of $G^\#$ corresponding to $f\in
 \cal F$.  Then there must exist a vertex $f_i^*$ in
 $\{f_1^*,\ldots, f_r^*\}$ being adjacent to a vertex $h_j^*$ in
 $V(G^*)\setminus \{f_1^*,\ldots, f_r^*\}=\{h_1^*,\ldots,
 h_{k-r}^*\}$.
That means that $f_i$ and $h_j$ are adjacent, contradicting Claim 1.
  \end{proof}


From the above arguments, we have the following main results of
this paper.

\begin{thm}\label{ is irreducible}
For a plane elementary bipartite graph $G$, $\mathbf{M}(G)$ is
irreducible.
\end{thm}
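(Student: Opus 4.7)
The plan is to derive the irreducibility of $\mathbf{M}(G)$ from Lemma \ref{no complement} by a short argument: any non-trivial direct product decomposition of an FDL produces a pair of complementary central elements distinct from $\hat{0}$ and $\hat{1}$, and Lemma \ref{no complement} forbids the existence of such a pair when $G$ is elementary.

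First I would dispose of the degenerate case in which $G$ has at most two vertices: then $G$ is a single edge, $\mathbf{M}(G)$ is a one-element lattice, and by the definition of irreducibility (a lattice that cannot be written as a product of at least two non-trivial FDLs) this trivial lattice is vacuously irreducible. Assume henceforth that $|V(G)|>2$ and, for contradiction, that $\mathbf{M}(G)\cong\mathbf{L}_{1}\times\mathbf{L}_{2}$ with both factors non-trivial. Taking the central elements $e_{1}=(\hat{1}_{1},\hat{0}_{2})$ and $e_{2}=(\hat{0}_{1},\hat{1}_{2})$ attached to this decomposition (as described in the preliminaries and illustrated in Figure 1), one has $e_{1},e_{2}\notin\{\hat{0},\hat{1}\}$ together with $e_{1}\vee e_{2}=\hat{1}$ and $e_{1}\wedge e_{2}=\hat{0}$, so $e_{1}$ and $e_{2}$ are mutually complementary. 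This directly contradicts Lemma \ref{no complement}, which asserts that $M^{\hat{1}}$ and $M^{\hat{0}}$ are the only elements of $\mathbf{M}(G)$ admitting a complement.

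I do not anticipate a real obstacle in this final step, because the genuine geometric content has already been absorbed into Lemma \ref{no complement}: there, the sublattice of shape $(\mathbf{r+1})\times(\mathbf{k-r+1})$ supplied by Lemma \ref{sublattice} is combined with the boundary analysis from Lemma \ref{altbound} and the face-count identity of Lemma \ref{delta} to force $\mathcal{F}(G)$ to split into two disjoint blocks, contradicting the connectedness of the inner dual of the $2$-connected graph $G$. Consequently, proving Theorem \ref{ is irreducible} reduces to observing that the definition of central element, together with the uniqueness of complements (Lemma \ref{unique complement}), turns any putative non-trivial product decomposition of $\mathbf{M}(G)$ into precisely the forbidden complementary pair.
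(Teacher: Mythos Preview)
Your proposal is correct and follows essentially the same approach as the paper: reduce to Lemma \ref{no complement} by observing that a non-trivial product decomposition would yield central elements different from $\hat{0}$ and $\hat{1}$ that are complementary. The only cosmetic difference is that the paper phrases the contrapositive via Lemma \ref{unique complement} (central elements have complements, hence none exist), whereas you explicitly exhibit the complementary pair $(\hat{1}_1,\hat{0}_2)$ and $(\hat{0}_1,\hat{1}_2)$; both routes are immediate once Lemma \ref{no complement} is in hand.
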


\begin{proof}  If $G=K_2$, it is trivial. Otherwise, $\mathbf{M}(G)$ is
a non-trivial FDL. By Lemma \ref{no complement}, every element of
$\mathbf{M}(G)$ has no complement except for $M^{\hat{1}}$ and
$M^{\hat{0}}$. By Lemma \ref{unique complement}, $\mathbf{M}(G)$ has
no central elements. Hence, $\mathbf{M}(G)$ is irreducible.
\end{proof}

{\em Elementary components} of a plane bipartite graph $G$ with
1-factor mean components other than $K_2$ of the subgraph obtained
from $G$ by the removal of all forbidden  edges (those edges  not
contained in any 1-factors).

\begin{cor}\label{ird}
Let $G$ be a weakly elementary plane bipartite graph with
 elementary components
 $G_1, G_2, \ldots, G_k$. Then $\mathbf
M(G)=\mathbf M(G_1)\times \mathbf M(G_2)\times \cdots \times \mathbf
M(G_k)$ is an irreducible decomposition.
\end{cor}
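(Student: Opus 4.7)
The plan is to establish the isomorphism $\mathbf M(G)\cong\prod_{i=1}^{k}\mathbf M(G_i)$ via the obvious restriction map and then apply Theorem~\ref{ is irreducible} to each factor to conclude that the decomposition is irreducible.

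Let $F$ denote the set of forbidden edges of $G$. By definition the non-$K_2$ components of $G-F$ are $G_1,\dots,G_k$, and every $K_2$-component of $G-F$ contributes a single forced edge belonging to every 1-factor of $G$. Since no 1-factor of $G$ can contain a forbidden edge, the restriction map
\[
\phi:\mathcal M(G)\longrightarrow\prod_{i=1}^{k}\mathcal M(G_i),\qquad M\longmapsto(M\cap E(G_1),\dots,M\cap E(G_k)),
\]
is a well-defined bijection whose inverse reassembles any tuple $(M_1,\dots,M_k)$ together with the fixed forced $K_2$-edges into a 1-factor of $G$. It therefore remains to check that $\phi$ is an isomorphism of posets, which by Theorem~\ref{webg} will upgrade it to the required lattice isomorphism.

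The central observation I would use is that arcs of $\vec Z(G)$ are local to elementary components. If $M\to M'$ is an arc of $\vec Z(G)$, then $M\oplus M'=\partial f$ for an inner face $f$ of $G$, every edge of $\partial f$ lies in $M\cup M'$ and so is not forbidden, and the cycle $\partial f$ therefore sits inside a single $G_i$. Since the interior of $f$ contains no vertex or edge of $G$ (hence none of $G_i$), the cycle $\partial f$ is an inner face boundary of $G_i$ as well, and the inherited black-white coloring and clockwise orientation of $\partial f$ identify its proper/improper labels in $G$ with those in $G_i$. Thus $\phi(M)$ and $\phi(M')$ differ only in the $i$-th coordinate, there by a cover in $\mathbf M(G_i)$. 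Conversely, any cover of some $\mathbf M(G_i)$ lifts to a cover of $\mathbf M(G)$ by holding all other coordinates fixed, the underlying inner face of $G_i$ being an inner face of $G$ as well because its boundary contains no forbidden edges. Chaining these local moves through directed paths in the two $Z$-transformation digraphs shows that $M\preceq M'$ in $\mathbf M(G)$ if and only if $\phi(M)\preceq\phi(M')$ coordinatewise, so $\phi$ is a poset isomorphism.

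Finally, each elementary component $G_i$ is by definition not $K_2$, so Theorem~\ref{ is irreducible} gives that $\mathbf M(G_i)$ is irreducible, and by Lemma~\ref{altbound} the boundary cycle of $G_i$ is properly $M^{\hat 1}_{G_i}$-alternating and improperly $M^{\hat 0}_{G_i}$-alternating, so $M^{\hat 1}_{G_i}\neq M^{\hat 0}_{G_i}$ and $\mathbf M(G_i)$ is non-trivial. Combined with the isomorphism above, this yields the required irreducible decomposition. The main obstacle I foresee lies in the planar-topological step identifying inner faces of $G$ that live in some $G_i$ with inner faces of $G_i$ itself, and in the converse lifting of inner faces of $G_i$ back to inner faces of $G$; this is precisely where one uses that forbidden edges and the components they separate cannot nest inside facial regions of elementary components in a weakly elementary graph, and without this the cover-by-cover matching behind $\phi$ would break.
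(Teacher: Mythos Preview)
Your approach is correct and is precisely the argument the paper leaves implicit: the corollary is stated without proof, relying on the evident product decomposition of $\mathcal M(G)$ along elementary components together with Theorem~\ref{ is irreducible} for irreducibility of each factor. Your cover-by-cover verification that $\phi$ is a poset isomorphism is the right way to fill in the details.

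The obstacle you flag at the end is real and is indeed the place where weak elementarity is used; your instinct about how to resolve it is correct. Concretely: if $f$ is an inner face of $G_i$ whose boundary $\partial f$ is $M$-alternating for some $M\in\mathcal M(G_i)$, extend $M$ to $\tilde M\in\mathcal M(G)$. Weak elementarity of $G$ forces $\partial f$ together with its interior in $G$ to be elementary; combining any 1-factor of that interior with $\tilde M$ outside shows every edge inside $\partial f$ is non-forbidden in $G$, hence lies in the component $G_i$, contradicting that $f$ was already a face of $G_i$ unless the interior is empty. Thus $f$ is an inner face of $G$ as well, and the lift of covers goes through. With this point secured, your proof is complete.
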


\begin{thm}({\bf Decomposition Theorem})\label{iff}
Let $\mathbf{L}$ be an FDL with a decomposition
$\mathbf{L}=\prod\limits_{i=1}^n\mathbf{L}_i$. Then $\mathbf{L}$ is
an MDL if and only if each $\mathbf{L}_i (1\leq i\leq n)$ is an MDL.
\end{thm}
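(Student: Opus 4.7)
The plan is to prove the two implications separately, deferring the simpler direction so it can be reused inside the harder one. The key ingredients are already in hand: Corollary \ref{ird} supplies an irreducible product decomposition of $\mathbf{M}(G)$ for any weakly elementary plane bipartite graph $G$, and Lemma \ref{permutation} guarantees uniqueness of the irreducible decomposition of any FDL.

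For the direction ($\Leftarrow$), I would suppose each $\mathbf{L}_i$ is an MDL witnessed by a plane bipartite graph $H_i$, so $\mathbf{L}_i\cong \mathbf{M}(H_i)$. Embed $H_1,\ldots,H_n$ in the plane with pairwise disjoint closures, each sitting in the unbounded face of the others, and let $G$ be the resulting plane bipartite graph. Then $\mathcal{M}(G)$ is in bijection with $\prod_i \mathcal{M}(H_i)$, the inner faces of $G$ are the disjoint union of the inner faces of the $H_i$'s, and (after pulling back the proper $2$-coloring component by component) a single $Z$-transformation of $G$ flips a face that belongs to exactly one component. Consequently $\vec Z(G)$ is the cartesian product of the $\vec Z(H_i)$, giving $\mathbf{M}(G)\cong \prod_i \mathbf{M}(H_i)\cong \mathbf{L}$, so $\mathbf{L}$ is an MDL.

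For the direction ($\Rightarrow$), suppose $\mathbf{L}\cong \mathbf{M}(G)$ for some plane bipartite graph $G$. Since $\mathbf{L}$ is a (connected) FDL, Theorem \ref{nwebg} forces $G$ to be weakly elementary. Letting $G_1,\ldots,G_k$ be its elementary components, Corollary \ref{ird} gives an irreducible decomposition $\mathbf{L}\cong \prod_{\ell=1}^k \mathbf{M}(G_\ell)$. Now refine the given decomposition by writing each $\mathbf{L}_i$ as a product of irreducible FDLs $\mathbf{L}_{i,j}$; concatenating produces a second irreducible decomposition $\mathbf{L}=\prod_{i,j}\mathbf{L}_{i,j}$. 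By Lemma \ref{permutation}, there is a bijection $\pi$ with $\mathbf{L}_{i,j}\cong \mathbf{M}(G_{\pi(i,j)})$, so each $\mathbf{L}_{i,j}$ is an MDL. Applying the already-proved ($\Leftarrow$) direction to $\mathbf{L}_i=\prod_j \mathbf{L}_{i,j}$ then yields that $\mathbf{L}_i$ itself is an MDL.

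The only real obstacle is verifying the disjoint-union step in ($\Leftarrow$): that the Hasse diagram $\vec Z(G)$ is indeed the cartesian product of the $\vec Z(H_i)$. This reduces to the observations that (i) an inner face of $G$ lives in a unique $H_i$, so alternating facial cycles are local; (ii) the white/black coloring can be chosen independently on each component, so ``proper/improper'' is preserved; and (iii) the orderings $\preceq$ combine coordinatewise via Definition~1. Once these routine checks are in place, the rest of the argument is bookkeeping around Lemma \ref{permutation}.
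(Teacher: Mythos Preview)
Your proof is correct and follows essentially the same approach as the paper's. The only cosmetic difference is in the $(\Leftarrow)$ direction: the paper connects the witness graphs $G_i$ by new forbidden edges in their exteriors to produce a single connected weakly elementary graph, whereas you take a disjoint planar embedding; both constructions yield $\mathbf{M}(G)\cong\prod_i\mathbf{M}(G_i)$ for the same reason (each inner face lies in a unique component and $Z$-transformations are local), and the $(\Rightarrow)$ direction---refining to irreducibles, invoking Corollary~\ref{ird} and Lemma~\ref{permutation}, then reassembling via $(\Leftarrow)$---matches the paper's argument exactly.
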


\begin{proof}    If each factor $\mathbf{L}_{i}
$  is an MDL, $1\leq i\leq n$, then there exists a weakly elementary
plane bipartite graph $G_i$ such that $\mathbf{M}(G_i)\cong
\mathbf{L}_{i}$. We construct a weakly elementary plane bipartite
graph $G$ by connecting $G_i$ to $G_{i+1}$ with a new edge in their
exteriors for each $1\leq i\leq n-1$. Then  such new edges are
forbidden edges  of $G$.
It follows that $\mathbf M(G)\cong\mathbf M(G_1)\times \mathbf
M(G_2)\times \cdots \times \mathbf M(G_n)\cong\mathbf L_1\times
\mathbf L_2\times \cdots \times \mathbf L_n=\mathbf L$. Hence
$\mathbf L$ is an MDL.

Conversely, suppose that $\mathbf L$ is an MDL. Then there exists a
plane weakly elementary bipartite graph $G$ such that
$\mathbf{M}(G)\cong \mathbf{L}$.  Let  $G_1, \cdots, G_m$ be the
non-trivial elementary components of $G$  ($m\geq 1$). By Corollary
\ref{ird} $\mathbf L\cong\prod_{j=1}^m\mathbf{M}(G_j)$ is an
irreducible decomposition.  If
$\mathbf{L}=\prod_{i=1}^n\mathbf{L}_i$ is irreducible, then by Lemma
\ref{permutation}, $m= n$ and there exists a permutation $\pi$ of
$[n]$ such that $\mathbf{L}_i=\mathbf{M}(G_{\pi(i)})$ $i=1, 2,
\cdots, n$. So each $\mathbf{L}_i$ ($1\leq i\leq n$) is an MDL. If
$\prod_{i=1}^n\mathbf{L}_i$  is not irreducible, then each factor
$\mathbf{L}_i$ is a direct product of some $\mathbf{M}(G_j)$'s. So
each factor $\mathbf L_i$ is still an MDL.
\end{proof}

\section{MDL $\mathbf J(\mathbf{m}\times \mathbf{n})$}

From now on  we will present two typical irreducible MDLs by the
{\em fundamental theorem for finite distributive lattice} (FTFDL).

Let $\mathbf{P}$ be a finite poset. An {\em order ideal} ({\em
semi-ideal} or {\em down-set}) $\mathbf{I}$ of  $\mathbf{P}$ is a
subset  of $\mathbf{P}$  if for every $x\in \mathbf{I}$,  $y\preceq
x$ implies $y\in \mathbf{I}$.
The set $J(\mathbf{P})$ of order
ideals of  $\mathbf{P}$, ordered by the set-inclusion, forms a poset
$\mathbf{J(P)}$. 
It is well known that $\mathbf{J(P)}$ is indeed a distributive
lattice.  The FTFDL states that the converse is true.

\begin{thm}[FTFDL](\cite{ec})\label{ftfdl}
Let $\mathbf{L}$ be an FDL. Then there is a unique (up to
isomorphism) finite poset $\mathbf{P}$ for which $\mathbf{L}\cong
\mathbf{J(P)}$.
\end{thm}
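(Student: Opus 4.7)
The plan is to take the classical route via join-irreducibles. Recall that an element $p\in \mathbf{L}$ is \emph{join-irreducible} if $p\neq \hat{0}$ and $p=a\vee b$ forces $p=a$ or $p=b$; equivalently in a finite lattice, $p$ covers a unique element. Let $\mathbf{P}:=\mathrm{Irr}(\mathbf{L})$ denote the subposet of $\mathbf{L}$ consisting of join-irreducible elements with the induced order. Define
\[
\phi:\mathbf{L}\to \mathbf{J(P)},\qquad \phi(x):=\{p\in \mathbf{P}:p\preceq x\}.
\]
Clearly $\phi(x)$ is an order ideal of $\mathbf{P}$, so $\phi$ is well-defined. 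The claim is that $\phi$ is a lattice isomorphism, which yields existence.

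First I would verify that $\phi$ preserves meet and join. The identity $\phi(x\wedge y)=\phi(x)\cap \phi(y)$ is immediate from the definition. For $\phi(x\vee y)=\phi(x)\cup \phi(y)$, the nontrivial inclusion uses distributivity: if $p\in \mathbf{P}$ and $p\preceq x\vee y$, then $p=p\wedge(x\vee y)=(p\wedge x)\vee (p\wedge y)$, and since $p$ is join-irreducible, either $p=p\wedge x$ or $p=p\wedge y$, whence $p\preceq x$ or $p\preceq y$. This step is the main place where the distributivity hypothesis is essential, and I expect it to be the principal technical content of the proof.

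Next I would show $\phi$ is a bijection. Injectivity follows from the standard fact that every element of a finite lattice is the join of the join-irreducibles below it, so $x=\bigvee \phi(x)$ and $\phi(x)=\phi(y)$ forces $x=y$. For surjectivity, take any order ideal $I\in \mathbf{J(P)}$ and put $x:=\bigvee I$. Then $I\subseteq \phi(x)$ by construction. Conversely, if $p\in \mathbf{P}$ satisfies $p\preceq x=\bigvee_{q\in I}q$, then by distributivity $p=\bigvee_{q\in I}(p\wedge q)$, so join-irreducibility of $p$ forces $p=p\wedge q$ for some $q\in I$, i.e. $p\preceq q$; since $I$ is an order ideal, $p\in I$. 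Thus $\phi(x)=I$ and $\phi$ is surjective.

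Finally, for uniqueness I would identify $\mathbf{P}$ intrinsically inside $\mathbf{J(P)}$. The join-irreducibles of $\mathbf{J(P)}$ are precisely the principal order ideals $\langle p\rangle=\{q\in\mathbf{P}:q\preceq p\}$ for $p\in \mathbf{P}$, and the map $p\mapsto \langle p\rangle$ is an isomorphism of posets $\mathbf{P}\cong \mathrm{Irr}(\mathbf{J(P)})$. Consequently, if $\mathbf{J(P)}\cong \mathbf{J(Q)}$, then $\mathbf{P}\cong \mathrm{Irr}(\mathbf{J(P)})\cong \mathrm{Irr}(\mathbf{J(Q)})\cong \mathbf{Q}$. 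Combined with the construction above, this proves that $\mathbf{P}$ is the unique (up to isomorphism) poset with $\mathbf{L}\cong \mathbf{J(P)}$. \hfill$\square$
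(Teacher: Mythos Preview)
Your proof is correct and is the standard argument via join-irreducibles. Note, however, that the paper does not actually prove this theorem: it is quoted as the classical FTFDL from Stanley's \emph{Enumerative Combinatorics} \cite{ec}, and the only remark the paper adds is that $\mathbf{P}$ may be taken to be the subposet of join-irreducible elements of $\mathbf{L}$---precisely the construction you use. So your approach is not merely consistent with the paper's treatment, it fills in exactly the proof the paper defers to the reference.
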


In fact the above $\mathbf{P}$ can be viewed as a subposet of
$\mathbf{L}$ consisting of all join-irreducible elements of
$\mathbf{L}$: an element $x$ of $\mathbf{L}$ is said  to be {\em
join-irreducible} if one cannot write $x=y \vee z$ where $y\prec x$
and $z\prec x$.

\begin{figure}[!htbp]
\refstepcounter{figure} \label{fig03}
\begin{center}
\includegraphics[width=14cm,keepaspectratio]{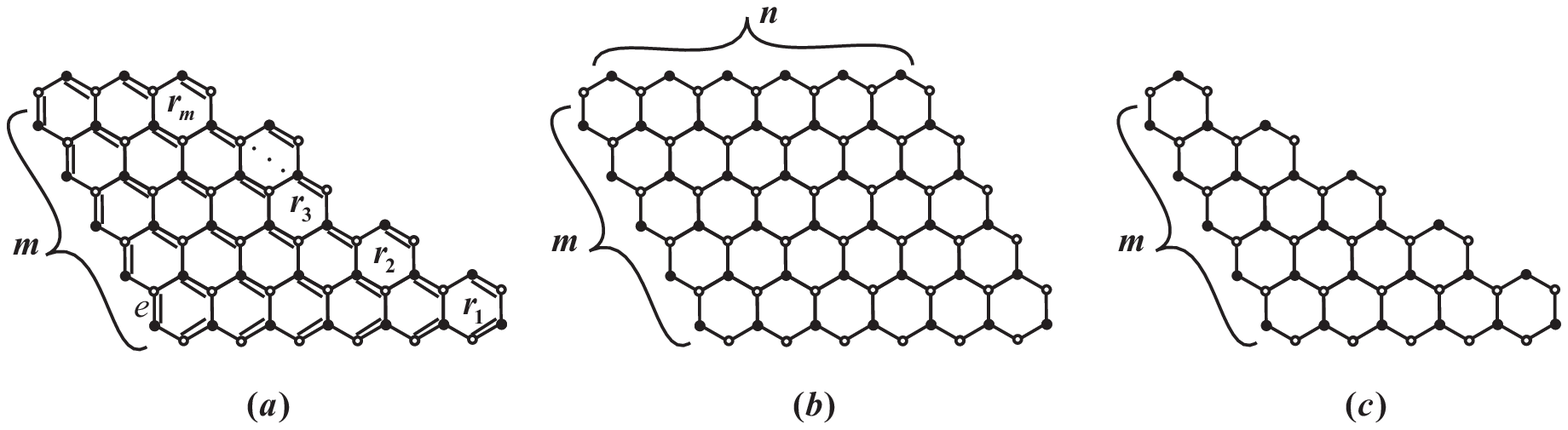}\\
Figure  \ref{fig03}. {(a)Truncated parallelogram $H=L(r_1, r_2,
\cdots, r_m)$ with root 1-factor $M^{\hat{0}}$,\\ (b) parallelogram
$L(m;n)$, and (c)prolate triangle $T_m$.}
\end{center}
\end{figure}
 In this section we show that $\mathbf J(W)$  are MDLs for any order ideal $W$ of $\mathbf{m}\times \mathbf{n}$.
Let us introduce a type of hexagonal systems called  truncated
parallelogram  \cite{bcgksb,ecgakb}:   A \emph{truncated
parallelogram}, simply denoted  by $H:=L(r_1, r_2, \cdots, r_m)$,
consists of $m$ condensed linear chains (rows) of the length $r_1,
\cdots, r_m, \, r_1 \ge r_2 \ge \cdots \ge r_m
>0$ and the first hexagons (conventionally drawn to the left) from
all chains also form a linear chain, the first column; In
particular, $L(m;n)=L(n,n,...,n)$  is  a \emph{parallelogram}, and
$T_m:=L(m,m-1,\cdots,1)$ is a \emph{prolate triangle}. For example,
see Fig.  \ref{fig03}. For convenience, all hexagonal systems
considered in this section are drawn such that an edge-direction is
vertical and the valleys are colored white.


Let $L$  and $B$ be the left and bottom perimeters of $H$,
respectively, which have a black vertex in common. The root 1-factor
 $M^{\hat{0}}$ of $H$ has all vertical edges in $L$, and a series
of parallel edges of $B$ from left-low to right-up, and a series of
parallel edges of $H-L-B$ from left-up to right-lower. We can see
that the boundary of $H$ is an improper $M^{\hat{0}}$-alternating
cycle. Hence $H$ is elementary \cite{zz2}.

 Since $H$ has a forcing edge $e$ (an edge contained in a unique
 1-factor), each $M^{\hat{0}}$-alternating cycle must pass through
 $e$; see \cite{zl} for details.  For each 1-factor $M$ of $H$ other than $M^{\hat{0}}$,  $C_M:=M \oplus M^{\hat{0}}$ is an
 $M^{\hat{0}}$-alternating cycle of $H$. Thus we have a bijection
 \cite{enumeration} between the 1-factors other than $M^{\hat{0}}$ of $H$
 and the  $M^{\hat{0}}$-alternating cycles of $H$.
 Hence the subhexagonal system of $H$ formed by $C_M$ together with
 its interior is also a truncated parallelogram.  Conversely, the perimeter of any
 sub-truncated parallelogram of $H$ with edge $e$ is an $M^{\hat{0}}$-alternating
 cycle. Hence each  1-factor $M$ of $H$ corresponds exactly to a
 sub-truncated parallelogram  of $H$ with edge $e$, denoted by $H_M$. However, $H_{M^{\hat{0}}}$
 corresponds to the empty graph (without vertex), the degenerated sub-truncated parallelogram of $H$.

 Let $P_M:=(L  \cup B)\oplus C_M$. Then $P_M$ is an $M$-alternating path with both end-edges in $M$ (see
 Fig.   \ref{fig04}(a)). Note that $C_{M^{\hat{0}}}=\emptyset$ and
 $P_{M^{\hat{0}}}=L \cup B$. From $M=
 M^{\hat{0}}\oplus C_M$, we have the following structure of $M$.

\begin{pro}\label{sfm}
For each  $M \in \mathcal{M}(H)$, the edges in $M \setminus E(P_M)$
have the same edge-direction from left-up to right-low. \hfill
$\Box$
\end{pro}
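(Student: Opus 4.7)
The plan is to carry out a direct analysis of $M\setminus E(P_M)$ using the two identities $M = M^{\hat{0}}\oplus C_M$ and $P_M=(L\cup B)\oplus C_M$. Every edge $f\in M$ belongs to exactly one of $M^{\hat{0}}\setminus C_M$ (inherited from the root 1-factor) or $C_M\setminus M^{\hat{0}}$ (swapped in along the alternating cycle $C_M$), and I will show that in either situation the hypothesis $f\notin E(P_M)$ forces $f$ to point from left-up to right-low.

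In the first situation, $f\in M^{\hat{0}}$ and $f\notin C_M$; the definition of $P_M$ as $(L\cup B)\oplus C_M$ then forces $f\notin L\cup B$, so $f\in M^{\hat{0}}\cap (H-L-B)$. By the stated description of $M^{\hat{0}}$, every such edge has the required direction. In the second situation, $f\in C_M$ and $f\notin M^{\hat{0}}$, and the same definition forces $f\in L\cup B$. Thus $f\in (L\cup B)\setminus M^{\hat{0}}$, and the task reduces to showing that every edge of $L\cup B$ lying outside $M^{\hat{0}}$ has the required direction.

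For the zigzag bottom perimeter $B$ this is immediate: $M^{\hat{0}}$ contains all of its left-low to right-up edges, and the only other edge-direction appearing in a hexagonal zigzag is left-up to right-low, matching the claim. The nontrivial piece is the left staircase $L$. Its vertical edges all lie in $M^{\hat{0}}$, so the non-$M^{\hat{0}}$ edges of $L$ are precisely the slant steps of the staircase; one must verify that these slant steps go from left-up to right-low rather than from left-low to right-up. This is a geometric property of the drawing convention for $H$ used in Figure 3, following from the fact that the first column of $H$ is drawn so that each two consecutive hexagons in it share a slant edge of one specific orientation, which is in turn consistent with the same convention that makes the inner $M^{\hat{0}}$-edges of $H-L-B$ point from left-up to right-low.

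I expect the last step to be the main obstacle: it is a bookkeeping statement about the geometry of $H$ rather than about matchings, but without it the second case cannot be closed. The cleanest route is probably a brief inspection of a single hexagon in the first column together with its immediate neighbour, using the black-white colouring convention (valleys white) and the prescription of $M^{\hat{0}}$ to pin down the orientation of one slant step of $L$; once that orientation is fixed, all slant steps of $L$ share it by translation along the column. Combining this with the two cases above then shows that every edge of $M\setminus E(P_M)$ has the same edge-direction, going from left-up to right-low.
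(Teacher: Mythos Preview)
Your proof is correct and follows the same line the paper indicates: the paper gives no explicit argument for Proposition~\ref{sfm}, simply marking it as an immediate consequence of the identity $M = M^{\hat{0}} \oplus C_M$ together with the stated description of $M^{\hat{0}}$ and of $P_M = (L \cup B) \oplus C_M$. Your two-case analysis is exactly the unpacking of this implicit argument, and your identification of the one nontrivial geometric check (that the slant steps of $L$ point from left-up to right-low under the drawing convention of Figure~3) is accurate and is precisely what the paper absorbs into the figure.
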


\begin{figure}[!htbp]
\refstepcounter{figure} \label{fig04}
\begin{center}
\includegraphics[width=14cm,keepaspectratio]{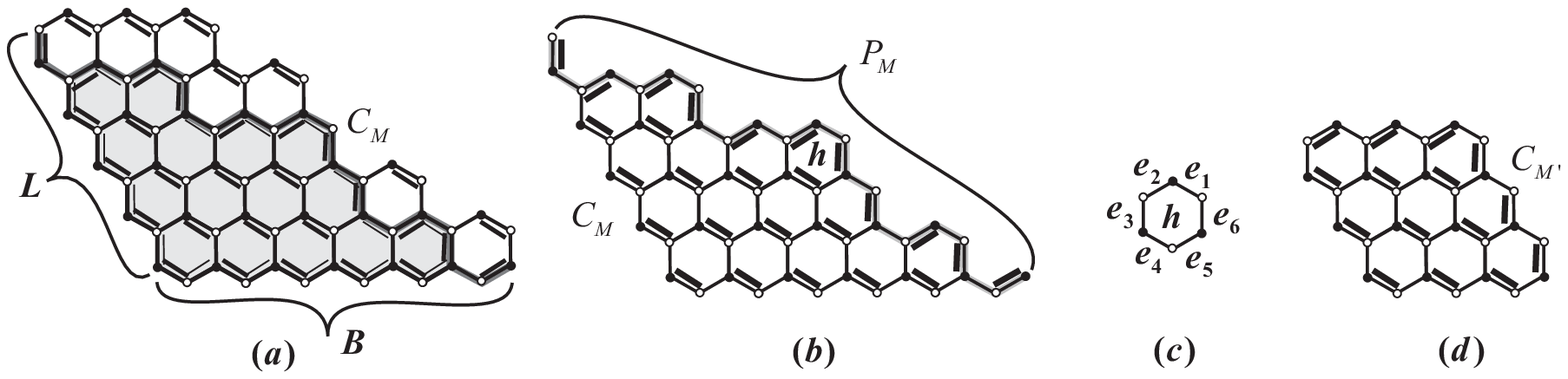}\\
Figure  \ref{fig04}. {(a) Truncated parallelogram $H$ with an
$M,M^{\hat{0}}$-alternating cycle $C_M$, (b)  $M$-alternating path
$P_M$,  (c) a hexagon $h$,  and (d) sub-parallelogram $H_{M'}$.}
\end{center}
\end{figure}

\begin{Lemma}\label{alth}
Let $M \in \mathcal{M}(H)$ and $h$  an $M$-alternating hexagon of
$H$. Then $h$ intersects at three consecutive edges  of $P_M$;
Moreover, $h$ is proper if and only if $h \subseteq H_M$.
\end{Lemma}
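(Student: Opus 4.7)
The plan is to combine Proposition~\ref{sfm} with a direction analysis of $h$'s edges followed by a local case analysis of how $P_M$ behaves near $h$. In any hexagon of $H$, the six edges partition into three parallel direction classes---vertical, NW-to-SE diagonal, and NE-to-SW diagonal---with two edges per class, and any perfect matching of a single hexagon contains one edge from each class. Hence the three $M$-edges of $h$ comprise one vertical edge, one NW-to-SE edge and one NE-to-SW edge. By Proposition~\ref{sfm}, every edge of $M\setminus E(P_M)$ has direction NW-to-SE; therefore the vertical $M$-edge and the NE-to-SW $M$-edge of $h$ must both lie on $P_M$. These two $M$-edges are at cyclic distance two on $h$'s boundary and are separated on the short arc by a single intermediate edge, which is forced to be a non-$M$ edge of NW-to-SE direction.

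To complete the first claim, I would show that this intermediate edge also lies on $P_M$, yielding three consecutive edges of $h$'s boundary that are simultaneously three consecutive edges of $P_M$. The rotation $M':=M\oplus E(h)$ is again a 1-factor of $H$, so $C_{M'}=C_M\oplus E(h)$ bounds another sub-truncated parallelogram $H_{M'}$ containing the forcing edge $e$. Decomposing $P_M$ as $(L\setminus C_M)\cup(B\setminus C_M)\cup(C_M\setminus(L\cup B))$---the last piece being the inner arc of $C_M$ inside $H$---one can perform a case analysis of how $h$ attaches to this decomposition. The analysis shows that the two $M$-edges of $h$ already on $P_M$ are connected along $P_M$ via exactly the intermediate non-$M$ edge rather than by a detour outside $h$, so that $E(h)\cap E(P_M)$ is precisely a three-consecutive-edge arc of $h$.

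For the second claim, $M$ restricted to $E(H_M)$ coincides with $M^{\hat{0}}\oplus C_M$ restricted there, which equals the source 1-factor $M^{\hat{1}}_{H_M}$ of $H_M$; since a source admits no improper alternating cycle, any $M$-alternating hexagon $h\subseteq H_M$ is automatically proper. Conversely, if $h\not\subseteq H_M$, then $h$ lies in $H\setminus H_M$ adjacent to the inner arc of $C_M$ (or to its continuation along the outer boundary segments $L\setminus C_M$ and $B\setminus C_M$), and the rotation $M'=M\oplus E(h)$ corresponds to the strictly larger sub-truncated parallelogram $H_{M'}=H_M\cup\{h\}$; hence $M\prec M'$ in $\mathbf{M}(H)$ and $h$ is improper. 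The main obstacle is the case analysis in the second paragraph---verifying rigorously that $P_M$ traces the short arc of $h$ between its two $M$-edges rather than taking a long detour---which will require enumerating the local configurations of $h$ with respect to the NE boundary of $H_M$ and the transition points where the inner arc of $C_M$ meets the outer segments of $P_M$.
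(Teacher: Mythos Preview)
Your opening direction analysis via Proposition~\ref{sfm} is exactly the paper's starting point, and your identification of the two non-NW--SE $M$-edges of $h$ as lying on $P_M$, separated by one NW--SE non-$M$ edge, is correct. The paper, however, bypasses the case analysis you flag as ``the main obstacle'': it simply observes that $|E(h)\cap E(P_M)|\le 3$. This follows from the staircase geometry of $P_M$ (it is the concatenation of the outer segments $L\setminus C_M$, $B\setminus C_M$ with the inner NE-boundary arc of $H_M$, so every hexagon of $H$ lies entirely on one side of $P_M$ and meets it along at most three consecutive edges). Once that bound is in hand, the two $M$-edges you already placed on $P_M$ force the intermediate edge onto $P_M$ as well, with no further casework.

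Your forward direction of the ``moreover'' clause---that $M$ restricted to $H_M$ is the source $1$-factor of $H_M$, hence every $M$-alternating hexagon inside $H_M$ is proper---is a clean alternative to the paper's edge-label bookkeeping. But your converse is circular. You write that $H_{M'}=H_M\cup\{h\}$ ``hence $M\prec M'$ in $\mathbf{M}(H)$ and $h$ is improper''. Deducing $M\prec M'$ from $H_{M'}\supsetneq H_M$ is precisely Lemma~\ref{mhm}, whose proof invokes the present lemma; and deducing it instead from ``$h$ is improper $M$-alternating'' assumes what you are trying to prove. The paper avoids this by arguing geometrically: once you know $E(h)\cap E(P_M)$ consists of three consecutive edges, the hexagon lies on a definite side of $P_M$; labelling the edges $e_1,\dots,e_6$ as in Fig.~4(c), the triple is $\{e_1,e_2,e_6\}$ exactly when $h\subseteq H_M$ and $\{e_3,e_4,e_5\}$ exactly when $h\not\subseteq H_M$, and these two triples correspond respectively to $e_2,e_6\in M$ (proper) and $e_3,e_5\in M$ (improper). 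You can repair your argument the same way, or dually note that outside $H_M$ the matching $M$ agrees with $M^{\hat 0}$ on all non-$P_M$ edges, so any $M$-alternating hexagon there is $M^{\hat 0}$-alternating along its non-$P_M$ edges and hence improper.
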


\begin{proof} If $h$ is disjoint with $P_M$, then $h$ is not $M$-alternating by Proposition \ref{sfm}.
Otherwise, $1\leq |E(h\cap P_M)|\leq 3$.  Since  $h$ is
$M$-alternating, $h$ intersects at three consecutive edges of $P_M$.
So $h$ is proper $M$-alternating if and only if $e_2, e_4, e_6 \in
M$. This holds if and only if $e_2, e_6 \in M \cap E(P_M)$. Thus $h$
and $P_M$ have exactly three common edges $e_1,e_2, e_6$ and $h \in
H_M$. Similarly, $h$ is improper $M$-alternating if and only if $h$
and $P_M$ have exactly three common edges $e_3, e_4,e_5$ and $h
\notin H_M$ (see Fig.  \ref{fig04} (b) and(c)).
\end{proof}

\begin{Lemma}\label{mhm}
Let $M, M' \in \mathcal{M}(H)$. Then $M' \preceq M$ in $\mathbf{M}(H)$ if and only if $H_{M'}$
is a sub-truncated parallelogram of $H_{M}$, namely $H_{M'} \subseteq H_{M}$.
\end{Lemma}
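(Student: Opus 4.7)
The plan is to prove both implications by peeling: every covering relation in $\vec{Z}(H)$ corresponds to removing a single corner hexagon from $H_M$. The bridge between the two sides of the equivalence is the symmetric-difference identity
\[
  C_{M''} = C_M \oplus E(h) \quad \text{where } M'' := M \oplus E(h),
\]
combined with Lemma \ref{alth}: a proper $M$-alternating hexagon $h$ lies in $H_M$ and shares three consecutive edges with $P_M$. Since $h \subseteq H_M$, those three shared edges must lie on the portion of $P_M$ coming from $C_M$ (the right-and-top boundary of $H_M$) rather than on the $L \cup B$-portion that sits outside $H_M$. Rotating $h$ therefore produces a $1$-factor $M''$ with $H_{M''} = H_M \setminus h$, again a sub-truncated parallelogram containing the forcing edge $e$.

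For the ``if'' direction, assume $H_{M'} \subseteq H_M$ and induct on the number of hexagons in $H_M \setminus H_{M'}$. If this number is zero then $M = M'$. Otherwise, since both $H_M$ and $H_{M'}$ are truncated parallelograms anchored at $e$, the row-length sequence of $H_M$ dominates that of $H_{M'}$ coordinatewise, so a peak hexagon $h$ on the upper-right boundary of $H_M$ can be chosen to lie outside $H_{M'}$. By Lemma \ref{alth} this $h$ is proper $M$-alternating, giving an arrow $M \to M_1 := M \oplus E(h)$ in $\vec{Z}(H)$ with $H_{M_1} = H_M \setminus h$; since $H_{M'} \subseteq H_{M_1}$, the induction hypothesis yields $M' \preceq M_1 \preceq M$.

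For the ``only if'' direction, suppose $M' \preceq M$ and choose a directed path $M = M_0 \to M_1 \to \cdots \to M_k = M'$ in $\vec{Z}(H)$. Each arrow $M_i \to M_{i+1}$ rotates a proper $M_i$-alternating hexagon $h_i$, which by Lemma \ref{alth} lies in $H_{M_i}$; the bridge identity then gives $H_{M_{i+1}} = H_{M_i} \setminus h_i \subseteq H_{M_i}$. Composing these inclusions yields $H_{M'} \subseteq H_M$.

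The main obstacle is the technical verification underlying the peeling identity: one must confirm that the three edges $h$ shares with $P_M$ indeed lie on $C_M$ rather than on the $L \cup B$-complement within $P_M$, so that $C_M \oplus E(h)$ bounds exactly the region $H_M \setminus h$, and further that this region is still a truncated parallelogram with weakly decreasing row lengths. A secondary but non-trivial point, needed in the ``if'' direction, is to locate a peelable peak hexagon of $H_M$ avoiding $H_{M'}$ whenever $H_{M'} \subsetneq H_M$; this follows from a row-by-row comparison of the two truncated-parallelogram shapes together with the fact that the upper-right corner of any such nested shape always supplies an $M$-alternating hexagon.
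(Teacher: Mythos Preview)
Your argument is correct and follows essentially the same route as the paper: both reduce the equivalence to the covering-relation level (a cover in $\mathbf{M}(H)$ $\Leftrightarrow$ deleting one boundary hexagon from $H_M$) via the identity $C_{M\oplus h}=C_M\oplus h$ together with Lemma~\ref{alth}, and then propagate along saturated chains. The paper organizes it as ``prove the covering characterization first, then chain up'', whereas you run the two implications separately with an induction on $|H_M\setminus H_{M'}|$; the content is the same, and your discussion of why the three shared edges lie on $C_M$ (and why a peelable peak hexagon exists) makes explicit details the paper leaves to the reader.
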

\begin{proof} We first show that $M$ covers $M'$ if and only if $H_M$ can be
obtained from $H_{M'}$ by adding a hexagon.
If $M$ covers $M'$, then there is a proper $M$-alternating hexagon
$h$ such that $M'=M \oplus h$ by Theorem \ref{webg}, and $C_{M'}= M'
\oplus M^{\hat{0}} = (M' \oplus M) \oplus (M \oplus M^{\hat{0}}) = h
\oplus C_M$.  By Lemma \ref{alth}, we have that $h \in H_M$ has
exactly three edges of $P_M$.  If $h=C_M$, the result is trivial.
Otherwise, $C_{M'}$ is an improper $M^{\hat{0}}$-alternating cycle,
and the sub-truncated parallelogram $H_{M'}$ of $H$ bounded by
$C_{M'}$ can be obtained by removing $h$ from $H_M$.

Conversely,  assume that $H_{M'}$ can be obtained from $H_M$ by
removing a hexagon $h$ of $H$. Since both $H_M$ and $H_{M'}$ are
sub-truncated parallelograms of $H$, $h \in H_M$ must have exactly
three edges of $P_M$. By Lemma \ref{alth}, $h$ is proper
$M$-alternating. Then $M'=M^{\hat{0}} \oplus C_{M'}=M^{\hat{0}}
\oplus (C_M \oplus h)=M \oplus h$. Hence $M$ covers $M'$ in
$\mathbf{M}(H)$.

We now show the lemma. If $M' \preceq M$ in $\mathbf{M}(H)$, we can
show that that $H_{M'} \subseteq H_{M}$ by choosing a saturated
chain between $M$ and $M'$ and  applying repeatedly the above fact
proved. If $H_{M'} \subseteq H_{M}$, there are a series of
sub-truncated parallelograms of $H$: $H_1(=H_{M'}), H_2, ...,
H_t(=H_M)$, such that each $H_i$ is obtained from $H_{i+1}$ by
adding a hexagon. Each $H_i$ corresponds to a 1-factor $M_i$ of $H$,
$i=1,2,...,t$. By the above fact we have $M_{i+1}$ covers $M_i$,
$i=1,2,...,t-1$. Hence $M'\preceq M$.
 \end{proof}

Now, we define a poset  on $\mathcal{F}(H)$, the set of hexagons of
$H$. $h \in H$ is labeled with $h_{ij}$ if $h$ lies in the $i$-th
row and $j$-th column, $1\le j \le r_i, 1\leq i\leq m$. For two
hexagons $h_{ij}$ and $h_{kl}$, $h_{ij} \preceq h_{kl}$ if and only
if $i \leq k$ and $j \leq l$. Then $\mathbf{F}(H):=(\mathcal{F}(H),
\preceq)$ is a poset. If $H$ is a parallelogram, then
$\mathbf{F}(H)$ is $\mathbf{m}\times \mathbf{n}$. In general,
$\mathbf{F}(H)$ is an order ideal of $\mathbf{m}\times \mathbf{n}$.
For example, see Fig.  \ref{fig05}.


\begin{figure}[!htbp]
\refstepcounter{figure} \label{fig05}
\begin{center}
\includegraphics[width=14cm,keepaspectratio]{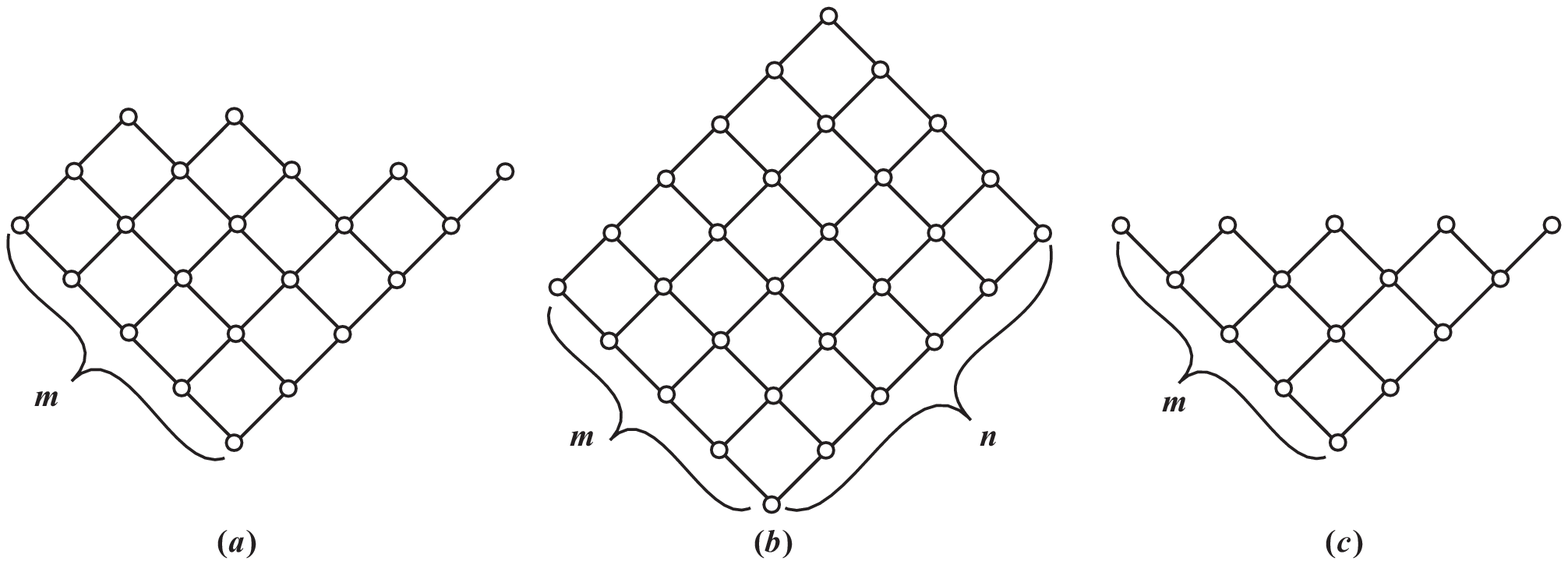}\\
Figure \ref{fig05}. (a) $\mathbf{F}(L(r_1, r_2, \cdots, r_m))$, (b)
$\mathbf{F}(L(m;n))$,  and (c) $\mathbf{F}(T_m)$.
\end{center}
\end{figure}

By Lemma \ref{mhm}, we can see that  $M\in \mathbf{M}(H)$ is
join-irreducible if and only if $H$ has a unique proper
$M$-alternating hexagon lying in $H_M$, which is  a
sub-parallelogram of $H$. Let $\mathcal{I}(\mathbf{M}(H))$ denote
the subposet of  $\mathbf{M}(G)$ consisting of all join-irreducible
elements.

\begin{Lemma}\label{ordpre}
 $\mathcal I(\mathbf{M}(H)) \cong \mathbf{F}(H)$.
\end{Lemma}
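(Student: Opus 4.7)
The plan is to build an explicit order-isomorphism $\phi\colon \mathcal{I}(\mathbf{M}(H))\to \mathbf{F}(H)$ that sends each join-irreducible 1-factor $M$ to the ``outermost'' hexagon of the sub-parallelogram $H_M$.

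First I would substantiate the observation recorded just before the lemma. In any finite distributive lattice, an element is join-irreducible iff it covers a unique element; by Lemma~\ref{mhm} the elements covered by $M$ correspond bijectively to the hexagons of $H_M$ whose removal still yields a sub-truncated parallelogram of $H$ containing $e$. If $H_M=L(s_1,\ldots,s_k)$, these removable hexagons are precisely the convex corners at positions $(i,s_i)$ with $s_i>s_{i+1}$ (using the convention $s_{k+1}:=0$). There is exactly one such corner, i.e.\ $M$ is join-irreducible, iff $s_1=s_2=\cdots=s_k$; that is, iff $H_M$ is a sub-parallelogram $L(k;l)$.

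Next, every sub-parallelogram of $H$ that contains the forcing edge $e$ is anchored at the corner of $H$ incident with $e$, so $L(k;l)\subseteq H$ is fully determined by the pair $(k,l)$, and the requirement that $L(k;l)$ fit inside $H=L(r_1,\ldots,r_m)$ reduces to $1\le k\le m$ and $1\le l\le r_k$ -- exactly the condition that the hexagon $h_{kl}$ belongs to $H$. Setting $\phi(M):=h_{kl}$ when $H_M=L(k;l)$ then yields a bijection onto $\mathcal{F}(H)$.

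Finally, order preservation is an immediate consequence of Lemma~\ref{mhm}: for join-irreducibles $M,M'$ with $H_M=L(i;j)$ and $H_{M'}=L(k;l)$, we have $M\preceq M'$ iff $H_M\subseteq H_{M'}$, and because both sub-parallelograms share the same anchor corner this containment is equivalent to $i\le k$ and $j\le l$, which is by definition $h_{ij}\preceq h_{kl}$ in $\mathbf{F}(H)$. The main obstacle is the geometric bookkeeping -- verifying the characterization of removable corners of a truncated parallelogram, and pinning down the identification of the sub-parallelogram $L(k;l)$ with the hexagon $h_{kl}$; once these identifications are in place, Lemma~\ref{mhm} converts the rest of the argument into a direct translation.
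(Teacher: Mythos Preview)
Your proposal is correct and follows essentially the same route as the paper: both define the isomorphism by sending a join-irreducible $M$ to the top-right hexagon $h_{kl}$ of the sub-parallelogram $H_M=L(k;l)$, and both derive order preservation directly from Lemma~\ref{mhm} via the containment $H_{M'}\subseteq H_M$. Your version is simply more explicit about the geometric bookkeeping (the characterization of removable corners and the anchoring of $L(k;l)$ at the forcing edge) that the paper leaves implicit in the sentence preceding the lemma.
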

\begin{proof}  A bijection $\psi:
\mathcal{I}(\mathbf{M}(H))\rightarrow\mathbf{F}(H)$ is defined as
follows. For each  $M \in \mathcal{I}(\mathbf{M}(H))$, let $\psi(M)$
denote the unique proper $M$-alternating hexagon of $H_M$, i.e. the
right-up-most hexagon of $H_M$. Moreover, both $\psi$ is an
isomorphism: for any $M, M' \in \mathcal{I}(\mathbf{M}(H))$, by
Lemma \ref{mhm} we have that $M' \preceq M$ in $\mathbf{M}(H)
\Leftrightarrow H_{M'} \subseteq H_M$ $\Leftrightarrow$ $\psi(M')
\preceq \psi(M)$ in $\mathbf{F}(H)$.
\end{proof}

By Theorem \ref{ftfdl} and Lemma \ref{ordpre}, we have a  main
theorem as follows.

\begin{thm}\label{cmo}
$\mathbf{M}(H) \cong \mathbf{J}(\mathbf{F}(H))$.          \hfill $\Box$
\end{thm}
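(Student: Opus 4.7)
The plan is short, because the theorem is essentially a direct application of the fundamental theorem for finite distributive lattices combined with the isomorphism already established in Lemma \ref{ordpre}. The only thing I need to verify before invoking FTFDL is that $\mathbf{M}(H)$ is actually a finite distributive lattice. This is immediate: the truncated parallelogram $H$ is a plane elementary bipartite graph (as noted right after the root $1$-factor $M^{\hat{0}}$ was introduced), and hence by Theorem \ref{webg}, $\mathbf{M}(H)$ is a finite distributive lattice.

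Next I would apply Theorem \ref{ftfdl} (FTFDL) to $\mathbf{M}(H)$: there exists a unique (up to isomorphism) finite poset $\mathbf{P}$ for which $\mathbf{M}(H)\cong \mathbf{J}(\mathbf{P})$. As explicitly pointed out just after the statement of FTFDL, this poset $\mathbf{P}$ can be realized as the subposet $\mathcal{I}(\mathbf{M}(H))$ of $\mathbf{M}(H)$ consisting of all join-irreducible elements. Thus $\mathbf{M}(H)\cong \mathbf{J}(\mathcal{I}(\mathbf{M}(H)))$.

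Finally, Lemma \ref{ordpre} tells us that $\mathcal{I}(\mathbf{M}(H))\cong \mathbf{F}(H)$ as posets. Since the operator $\mathbf{J}(\cdot)$ depends only on the isomorphism type of its input poset, this yields
\[
\mathbf{M}(H)\;\cong\;\mathbf{J}(\mathcal{I}(\mathbf{M}(H)))\;\cong\;\mathbf{J}(\mathbf{F}(H)),
\]
which is exactly the claim. There is no serious obstacle at this stage: the substantive work — matching join-irreducible $1$-factors with sub-parallelograms differing from a smaller one by a single hexagon, and checking that this correspondence is order-preserving in both directions — has already been performed in Lemmas \ref{mhm} and \ref{ordpre}. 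So the proof of Theorem \ref{cmo} is really a one-line assembly of FTFDL with Lemma \ref{ordpre}.
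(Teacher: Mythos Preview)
Your proposal is correct and matches the paper's own approach exactly: the paper states the theorem as an immediate consequence of Theorem~\ref{ftfdl} (FTFDL) together with Lemma~\ref{ordpre}, which is precisely the one-line assembly you describe. Your extra remark that $\mathbf{M}(H)$ is an FDL via Theorem~\ref{webg} (since $H$ is elementary) is the only detail the paper leaves implicit, and it is correctly supplied.
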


When  $H$  takes  all over  the truncated parallelograms for fixed
$m$ and $n$, $\mathbf{F}(H)$ goes all order ideals of $\mathbf{m}
\times \mathbf{n}$. From the above theorem we have an immediate
consequence as follows.

\begin{cor}
  Let $W$ be any order ideal of $\mathbf{m} \times \mathbf{n}$. Then $\mathbf{J(W)}$ is an irreducible MDL.         \hfill $\Box$
\end{cor}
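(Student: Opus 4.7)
My plan is to realize the given order ideal $W$ of $\mathbf{m}\times\mathbf{n}$ as $\mathbf{F}(H)$ for a suitable truncated parallelogram $H$, and then invoke Theorem \ref{cmo} together with Theorem \ref{ is irreducible}.

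First I would extract a partition shape from $W$. Since $W$ is downward-closed in the coordinatewise order on $\mathbf{m}\times\mathbf{n}$, setting
$$m':=\max\{i:(i,j)\in W\text{ for some }j\},\qquad r_i:=\max\{j:(i,j)\in W\}$$
for $1\le i\le m'$ yields $n\ge r_1\ge r_2\ge\cdots\ge r_{m'}>0$ and
$$W=\{(i,j):1\le i\le m',\ 1\le j\le r_i\}.$$
(The degenerate case $W=\emptyset$ is handled separately by taking $H=K_2$, for which $\mathbf{M}(H)$ is the one-element lattice and is vacuously irreducible.) Then $H:=L(r_1,r_2,\ldots,r_{m'})$ is a legitimate truncated parallelogram, and the labelling $h_{ij}$ used to define $\mathbf{F}(H)$ in the previous section makes the identity map a poset isomorphism $\mathbf{F}(H)\cong W$, since in both posets the relation is coordinatewise comparison $(i,j)\preceq(k,l)\iff i\le k,\ j\le l$.

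Having arranged this, Theorem \ref{cmo} gives $\mathbf{M}(H)\cong \mathbf{J}(\mathbf{F}(H))\cong \mathbf{J}(W)$, which shows $\mathbf{J}(W)$ is an MDL. For irreducibility I would note that $H$ is a plane elementary bipartite graph (recorded in the paragraph preceding Proposition \ref{sfm}, where the boundary of $H$ is exhibited as an improper $M^{\hat{0}}$-alternating cycle), so Theorem \ref{ is irreducible} immediately yields that $\mathbf{M}(H)$, and therefore $\mathbf{J}(W)$, is irreducible.

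The only step requiring any attention is the bookkeeping that an arbitrary order ideal of $\mathbf{m}\times\mathbf{n}$ really does arise as the hexagon poset of some truncated parallelogram. This is essentially immediate because both objects are parametrized by the same combinatorial datum—a partition fitting in an $m\times n$ box—so the verification reduces to matching the definition of $\preceq$ on $\mathbf{F}(H)$ with the coordinatewise order on $W$, which is direct. No substantial obstacle is anticipated beyond treating the empty-ideal boundary case.
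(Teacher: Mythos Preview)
Your proposal is correct and follows essentially the same approach as the paper: the corollary is stated as an immediate consequence of Theorem \ref{cmo}, the paper having already observed (in the paragraph preceding the corollary) that as $H$ ranges over truncated parallelograms, $\mathbf{F}(H)$ ranges over all order ideals of $\mathbf{m}\times\mathbf{n}$, and that $H$ is elementary (so Theorem \ref{ is irreducible} yields irreducibility). You have simply spelled out the bookkeeping more explicitly, including the empty-ideal boundary case, which the paper leaves implicit.
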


We can obtain a series of MDLs by applying the above theorem to the
special truncated parallelograms, such as parallelogram, prolate
triangle, etc. Let  $\mathbf{R}_m:=\mathbf{F}(T_m)$.  Here we give
two special ones:

\begin{cor}\label{ctplmn} $\mathbf{J}(\mathbf{m} \times \mathbf{n})$
and $\mathbf{J}(\mathbf{R}_{m})$ are irreducible MDLs. Moreover
\begin{enumerate}
  \renewcommand{\labelenumi}{(\arabic{enumi})}
  \item $\mathbf{J}(\mathbf{m} \times \mathbf{n}) \cong
  \mathbf{M}(L(m;n))$, and
  \item $\mathbf{J}(\mathbf{R}_{m}) \cong \mathbf{M}(T_{m})$.        \hfill $\Box$
\end{enumerate}
\end{cor}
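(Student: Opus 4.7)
The plan is to read this corollary as a direct specialization of Theorem \ref{cmo} together with the irreducibility result in Theorem \ref{ is irreducible}, so almost no new work is needed. First I would verify the two identifications of posets: for $H=L(m;n)$ the text already notes that $\mathbf{F}(L(m;n))$ coincides with $\mathbf{m}\times\mathbf{n}$ (every row has length $n$, every column length $m$, and the product order on row/column indices is precisely the hexagon order), while for $H=T_m$ the poset $\mathbf{F}(T_m)$ is $\mathbf{R}_m$ by the very definition $\mathbf{R}_m:=\mathbf{F}(T_m)$.

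Next, I would invoke Theorem \ref{cmo} to deduce
\[
\mathbf{J}(\mathbf{m}\times\mathbf{n})\;=\;\mathbf{J}(\mathbf{F}(L(m;n)))\;\cong\;\mathbf{M}(L(m;n)),
\]
and similarly $\mathbf{J}(\mathbf{R}_m)=\mathbf{J}(\mathbf{F}(T_m))\cong \mathbf{M}(T_m)$, which gives parts (1) and (2) and in particular shows that both lattices are MDLs (they are the matching lattices of plane bipartite graphs, namely the parallelogram $L(m;n)$ and the prolate triangle $T_m$).

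For the irreducibility assertion I would appeal to Theorem \ref{ is irreducible}: it suffices to observe that $L(m;n)$ and $T_m$, being truncated parallelograms, are plane elementary bipartite graphs, a fact already recorded in the paragraph before Proposition \ref{sfm} (their boundaries are improper $M^{\hat 0}$-alternating cycles, so they are elementary by \cite{zz2}). Hence $\mathbf{M}(L(m;n))$ and $\mathbf{M}(T_m)$ are irreducible, and therefore so are the isomorphic lattices $\mathbf{J}(\mathbf{m}\times\mathbf{n})$ and $\mathbf{J}(\mathbf{R}_m)$.

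There is essentially no obstacle; the only thing to be careful about is making the pointer to elementarity explicit, since Theorem \ref{ is irreducible} is stated for \emph{elementary} plane bipartite graphs rather than merely for those with a perfect matching. Everything else is a one-line application of results already established.
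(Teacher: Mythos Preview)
Your proposal is correct and matches the paper's approach: the corollary is stated with a $\Box$ and no proof, being regarded as an immediate specialization of Theorem~\ref{cmo} (together with the irreducibility already recorded for truncated parallelograms). You have simply spelled out the one-line justification that the paper leaves implicit.
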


Note  that \cite{bcgksb,ecgakb} the number of 1-factors of
parallelogram $L(m;n)$ and prolate triangle $T_m$ are
$\binom{m+n}{m}$ and $\frac{1}{m+2}\binom{2m+2}{m+1}$ (Catalan
number), respectively.

\section{MDL $\mathbf J(\mathbf{T})$}

In this section we will show that $\mathbf J(\mathbf{T})$ is an
irreducible
 MDL with  outerplane bipartite graphs for a poset $\mathbf{T}$
 implied by any orientation of a tree. A connected plane graph $G$ is \emph{outerplane} if all vertices lie
on the boundary of the outer face of $G$. Let $\mathcal{G}$ be the
set of all 2-connected outerplane bipartite graphs. Catacondensed
hexagonal systems are typical members of $\mathcal{G}$ \cite{kz}.

An edge set $T$ of a connected graph $G$ is called a \emph{minimal}
edge-cut if $G-T$ is not connected but $G-T'$ remains connected for
any proper subset $T'$ of $T$. For a plane graph $G$,  let $e^*$ and
$f^*$ denote the  edge and vertex of dual graph $G^*$ corresponding
to edge $e$ and face $f$ of $G$, respectively; For $T \subseteq
E(G)$, put $T^*: = \{ e^* : e \in T \}$. Some edges in a plane graph
$G$ form a minimal edge-cut in $G$ if and only if the corresponding
dual edges form a cycle in $G^*$ \cite{brwigt}.  A minimal edge-cut
$T$ of a plane bipartite graph $G$ is called \emph{elementary edge
cut} (\emph{e-cut} for short) \cite{zz2} if all edges of \;$T$ are
incident with white vertices of one component of $G-T$, called the
\emph{white bank} of $T$, and the other component is the \emph{black
bank} of $T$.

\begin{Lemma}\cite{ZYY}\label{mt1}\
 Let $T$ be a minimal edge-cut of $G \in \mathcal{G}$. Then $T$ is
 an e-cut of \;$G$ if and only if for any 1-factor $M$ of \;$G$,
 $\mid M \cap T \mid =1.$
\end{Lemma}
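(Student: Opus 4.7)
The plan is to exploit the outerplanar structure of $G$ to convert the condition $|M\cap T|=1$ into a colour-count constraint on one component of $G-T$. Since $G\in\mathcal{G}$ is $2$-connected and outerplane, its outer boundary is a Hamiltonian cycle of even length and its inner dual is a tree. By planar duality, the dual of $T$ is a simple cycle in $G^*$, which must pass through the outer-face vertex (otherwise it would yield a cycle in the inner dual, contradicting the tree structure). Hence $T$ contains exactly two boundary edges, and the two components $A,B$ of $G-T$ partition $V(G)$ into two contiguous arcs along the outer boundary.

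Next I would derive a key identity valid for every $1$-factor $M$. Split $T$ into $T_A$ (edges whose white endpoint lies in $A$) and $T_B$ (the remaining $T$-edges), and set $\alpha:=|M\cap T_A|$, $\beta:=|M\cap T_B|$, and $\sigma_A:=|A_w|-|A_b|$, where $A_w,A_b$ denote the white and black vertex sets of $A$. Since the $1$-factor $M$ restricts to a perfect matching between the unmatched-across vertices of $A$, we get $|A_w|-\alpha=|A_b|-\beta$, hence $\alpha-\beta=\sigma_A$ and
\[
|M\cap T|\;=\;\alpha+\beta\;=\;2\beta+\sigma_A.
\]

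For the forward direction, assume $T$ is an e-cut with white bank $A$. Then $T_B=\emptyset$, so $\beta=0$ and $|M\cap T|=\sigma_A$. Moreover, both boundary edges of $T$ have their white ends in $A$, so both extreme vertices of the arc $A$ on the outer boundary are white; since colours alternate along the boundary, $A$ has exactly one more white than black vertex, giving $\sigma_A=1$ and $|M\cap T|=1$. For the reverse direction, assume $|M\cap T|=1$ for every $1$-factor $M$. Then $2\beta+\sigma_A=1$ together with $\alpha,\beta\ge 0$ and integrality forces $\sigma_A\in\{-1,1\}$; by symmetry suppose $\sigma_A=1$, whence $\beta=0$ for every $M$. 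If $T_B$ were non-empty, then since $G\in\mathcal{G}$ is elementary in this setting, some $1$-factor $M_0$ would contain an edge of $T_B$, giving $\beta(M_0)\ge 1$, a contradiction. Therefore $T_B=\emptyset$ and $T$ is an e-cut with white bank $A$; the case $\sigma_A=-1$ is symmetric with $B$ playing the role of the white bank.

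The main obstacle I anticipate is the preliminary structural claim that the minimal edge-cut $T$ of $G\in\mathcal{G}$ contains exactly two boundary edges and partitions the outer boundary into two arcs; this must be argued carefully from the planar duality between bonds and simple dual cycles, combined with the tree structure of the inner dual of a $2$-connected outerplane graph. A secondary issue is the invocation of elementarity in the reverse direction, which should be justified separately by verifying that every $2$-connected outerplane bipartite graph possessing a $1$-factor is elementary (so that every edge appears in some $1$-factor).
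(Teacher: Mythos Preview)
The paper does not supply a proof of this lemma at all: it is quoted from \cite{ZYY} and used as a black box. There is therefore nothing in the paper to compare your argument against.

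That said, your proposal is a sound self-contained proof, and both of the obstacles you flag can be discharged easily. For the structural claim, the paper already records that a minimal edge-cut $T$ of a plane graph corresponds to a (simple) cycle $T^*$ in the dual $G^*$; since the inner dual $G^\#$ of a $2$-connected outerplane graph is a tree, $T^*$ must pass through the outer-face vertex $f_0^*$, and it does so exactly once because $T^*$ is simple. Hence $T$ contains exactly two boundary edges, and the Hamiltonian outer cycle is split into the two arcs $A,B$ as you claim. For the elementarity issue: every $G\in\mathcal G$ actually has a $1$-factor (the Hamiltonian boundary cycle has even length, since $G$ is bipartite, so its alternating edges give one), and every edge lies in some $1$-factor. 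Indeed, a boundary edge lies in one of the two alternating $1$-factors of the boundary cycle, while a chord $v_iv_j$ joins opposite colours, so each of the two boundary arcs between $v_i$ and $v_j$ has an even number of vertices and admits a perfect matching; combining these with $v_iv_j$ yields a $1$-factor containing the chord. Thus $G$ is elementary and your reverse-direction argument (forcing $T_B=\emptyset$ or $T_A=\emptyset$) goes through.

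One small point worth tightening: in the reverse direction you write ``$\sigma_A\in\{-1,1\}$'' as though it were an immediate parity observation, but in fact it follows because $\sigma_A=1-2\beta$ with $\beta\ge 0$ and $\alpha=1-\beta\ge 0$ force $\beta\in\{0,1\}$; it would help to say this explicitly.
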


We now give an orientation $\vec G^*$ of the dual $G^*$: an edge
$e^*$ is oriented as an arc from $f_1^*$ to $f_2^*$ if one goes
along $e^*$ from $f_1^*$ to $f_2^*$  the white end-vertex of $e$
lies right side.  For example, see Figs. 6 and 7. We can see that a
minimal edge-cut $T$ is an e-cut of $G$ if and only if $T^*$ forms a
directed cycle of $\vec G^*$.

For $G\in \mathcal{G}$, we now give a poset on $\mathcal{F}(G)$. Let
$\vec G^\#$ be the orientation of inner dual graph $G^\#$, obtained
from  directed dual graph $\vec G^*$ by deleting the vertex $f_0^*$
corresponding to the outer face of $G$. For $f_1,\,f_2\in
\mathcal{F}(G)$, we define ``$f_1\leq f_2$" if $\vec{G}^\#$ has a
directed path from $f_2^*$ to $f_1^*$. Since $\vec{G}^\#$ contains
no directed cycles, $\mathbf{F}(G):= (\mathcal{F}(G), \leq)$ is a
poset.

For a plane elementary bipartite graph $G$ with $M,M'\in
\mathbf{M}(G)$, if $M'\preceq M$, then there exists a saturated
chain  $M_0(=M)M_1 \cdots M_k(=M')$ in $\mathbf{M}(G)$ between $M$
and $M'$. Then  $M_{i-1} \text{ covers } M_{i}$, and $f_i:=M_{i-1}
\oplus M_i$ is a proper $M_{i-1}$-alternating face, $i=1,2, \cdots,
k$. Then we say that $M_i$ is obtained from $M_{i-1}$ by a
\emph{$Z$-transformation} on the (proper $M_{i-1}$-alternating) face
$f_i$, or simply by \emph{transforming} $f_i$. Further, we also say
that $M'$ is obtained from $M$ by a {\em Z-transformation sequence}
on inner faces $f_1, f_2, \cdots, f_k$, and $f_1, f_2, \cdots, f_k$
is a face sequence of $G$ by a  $Z$-transformations sequence
  from $M$ to $M'$. The $Z$-transformation of $G$ is {\em
simple} if every  inner face of $G$ is transformed at most once
during any Z-transformation sequence of $G$.

\begin{Lemma}\label{2opbg}
Let  $G \in \mathcal{G}$. If $f' \preceq f$ in
  $\mathbf{F}(G)$, then $f'$
  always appears after $f$ in any Z-transformation sequence of
  $G$ from $M^{\hat 1}$ to $M^{\hat 0}$. Hence $Z$-transformation of $G$ is
  simple.
\end{Lemma}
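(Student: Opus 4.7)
The plan is to reduce the ``$f'$ after $f$'' assertion to the case where $f$ covers $f'$ in $\mathbf F(G)$, and for that case to track the in/out status of the shared edge $e$ during the sequence. A key preliminary observation is that for $G\in\mathcal G$, both $M^{\hat 1}$ and $M^{\hat 0}$ consist only of boundary edges. Indeed, since $G$ is $2$-connected and outerplane, every vertex lies on the outer boundary $B$, and by Lemma~\ref{altbound} $B$ is proper $M^{\hat 1}$- (resp.\ improper $M^{\hat 0}$-) alternating; the alternation at each vertex forces one of its two $B$-incident edges into the matching, so $M^{\hat 1},M^{\hat 0}\subseteq E(B)$. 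In particular, every inner edge $e$ of $G$ satisfies $e\notin M^{\hat 1}$ and $e\notin M^{\hat 0}$.

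For the cover case I would first verify the orientation/properness correspondence: by the ``white endvertex on the right'' rule, an arc $f^{*}\to f'^{*}$ in $\vec G^{\#}$ is exactly the condition that the common edge $e$ of $f$ and $f'$ is improper for $f$ and proper for $f'$. Since proper and improper edges alternate around an inner face, a proper $M$-alternating face uses precisely its proper edges as matching edges; therefore, at the instant $f$ is transformed we have $e\notin M$, while at the instant $f'$ is transformed we have $e\in M$. Because $e$ is inner and so lies on the boundary of only $f$ and $f'$, the only transformations in the whole sequence that can toggle the status of $e$ are those of $f$ and $f'$ themselves. If $f'$ were transformed before $f$, then $e$ would still agree with $M^{\hat 1}$ at that moment, forcing $e\in M^{\hat 1}$---contradicting the preliminary observation. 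Hence $f$ precedes $f'$; the general statement follows by transitivity along a directed path in $\vec G^{\#}$ from $f^{*}$ to $f'^{*}$.

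Simplicity then falls out of Lemma~\ref{delta}: from $M^{\hat 1},M^{\hat 0}\subseteq E(B)$ we get $\mathcal C:=M^{\hat 1}\oplus M^{\hat 0}=B$, a single proper $M^{\hat 1}$-alternating cycle enclosing every inner face exactly once, so $\Delta_{\mathcal C}(f)=1$ and hence $\delta_{P}(f)=1$ for every face $f$ and every directed path $P$ from $M^{\hat 1}$ to $M^{\hat 0}$. Thus such a sequence transforms each inner face exactly once, and a general $Z$-transformation sequence between $M'\preceq M$ inherits the at-most-once property by being inserted into a full $M^{\hat 1}\to M^{\hat 0}$ path (prepending a path from $M^{\hat 1}$ to $M$ and appending one from $M'$ to $M^{\hat 0}$). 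The only real point of care is the orientation/properness translation; once that is in place, the remainder of the argument reduces to tracking a single bit---whether $e$ currently lies in the matching---along the sequence.
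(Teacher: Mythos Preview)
Your argument is correct and genuinely different from the paper's. For the ordering claim, the paper extends the covering arc $(f,f')$ to a maximal directed path in $\vec G^{\#}$ and then to a directed cycle in $\vec G^{*}$ through the outer-face vertex; this cycle corresponds to an e-cut $T$, and the paper invokes Lemma~\ref{mt1} ($|M\cap T|=1$ for every $M$) together with an induction along the faces $f_1,\dots,f_t$ on the path to force the transformation order. You bypass the e-cut machinery entirely: your key observation that $M^{\hat 1},M^{\hat 0}\subseteq E(B)$ (because $B$ is a Hamiltonian alternating cycle in an outerplane graph) immediately gives $e\notin M^{\hat 1}$ for the inner edge $e$ shared by $f$ and $f'$, and then tracking the single bit ``$e\in M$?'' along the sequence settles the cover case in one stroke. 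Both proofs handle simplicity the same way, via Lemmas~\ref{altbound} and~\ref{delta}, and your identification $M^{\hat 1}\oplus M^{\hat 0}=B$ is just a sharper restatement of what the paper uses. Your route is shorter and more self-contained for this outerplane setting; the paper's e-cut approach, while heavier here, is the one that would survive in contexts where the boundary is not Hamiltonian but e-cuts are still available.
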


\begin{proof} 

  Without loss of generality,  suppose that $f$ covers $f'$ in $\mathbf F(G)$.
  That is, $(f,f')$ is an arc of directed inner dual $\vec G^\#$. Then $(f,f')$
  can be extended to a maximal directed path of $\vec
  G^\#$, which can be further extended to a directed cycle of $\vec G^*$, denoted by
  $\vec C:=f_0^*e_0^*f^*_1e_1^*...f_t^*e^*_tf_0^*$; see Fig. 6.
   Then $T=\{e_0,e_1,...,e_t\}$ is an e-cut of $G$, each edge $e_j$ is a common
   edge of $f_j$ and $f_{j+1}$ (subscript module $t+1$), and each $e_j$ is a proper edge
   of $f_{j+1}$ and improper edge of $f_j$, $0\leq j\leq t-1$.  For any $M \in
  \mathbf{M}(G)$, by Lemma \ref{mt1}, $\mid M \cap T \mid =1$.

  Let $P:=M_1M_2\cdots M_s$ be a directed path in $\vec Z(G)$ from $M^{\hat 1}$ to $M^{\hat 0}$.
  Then $\delta_P(f)=1$ for all $f\in \cal F(G)$ by Lemmas \ref{altbound} and \ref{delta}. Suppose that $M_{i+1}$ is obtained from $M_i$ by a Z-transformation on
  $f_j$. It is sufficient to show that $f_{j}, f_{j+1},..., f_t$ do dot
  appear in Z-transformations from $M^{\hat 1}$ to $M_i$. We proceed by induction on $j$. If $j=1$,
  then $e_0\in M_1, M_2,...,M_i$. Hence proper edge $e_{k-1}$ of
  each $f_{k}$, $k\geq 2$, does not belong to $M_1,M_2,...,M_i$.
 Hence, the required holds. By induction hypothesis we have that $f_{j+1},..., f_t$ do dot
  appear in Z-transformations from $M^{\hat 1}$ to $M_{i+1}$ through
    $M_i$. Suppose that $M_{i'+1}$ is obtained from $M_{i'}$ by a Z-transformation on
  $f_{j+1}$. Then $i+1\leq i'$, and  proper edge $e_j$ of $f_{j+1}$ belong
  to all $M_{i+1},..., M_{i'}$. That implies that proper edge $e_k$ of $f_{k+1}$ does not
  belong to $M_{i+1},..., M_{i'}$ for all $k>j$. Hence  $f_{j+1},..., f_t$ do dot
  appear in Z-transformations from $M^{\hat 1}$ to $M_{i'}$; that is, $f_{j+2},..., f_t$ do dot
  appear in Z-transformations from $M^{\hat 1}$ to $M_{i'+1}$ through $M_{i'}$, as  expected.
\end{proof}

\begin{figure}[!htbp]
\begin{center}
\refstepcounter{figure} \label{fig06}
\includegraphics[width=9cm,keepaspectratio]{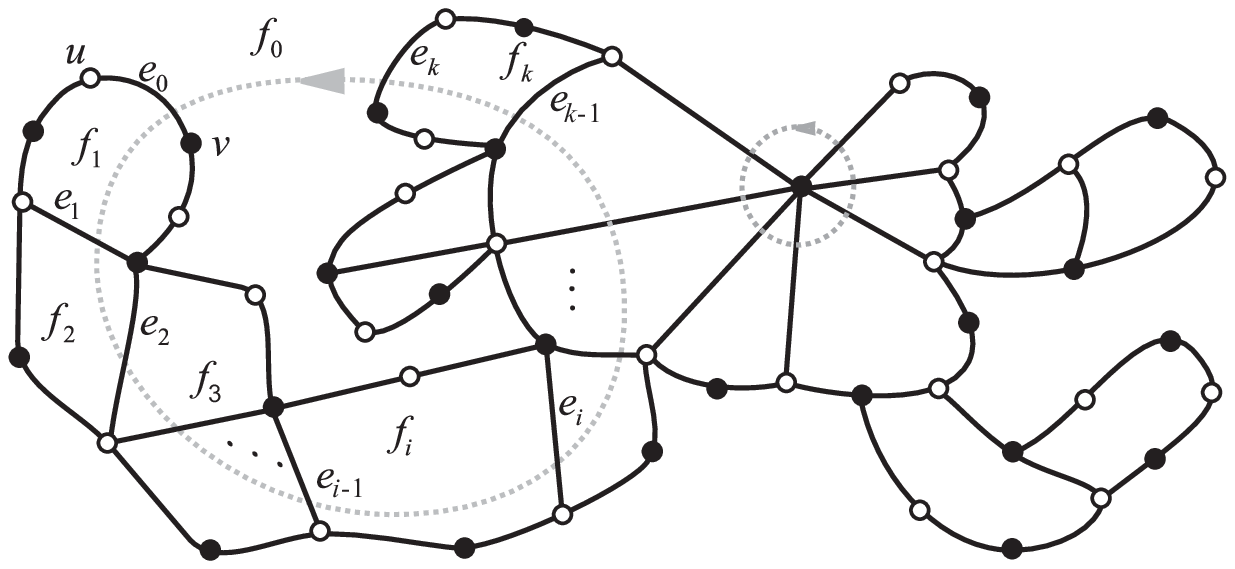}\\
Figure \ref{fig06}. { An outerplane bipartite graph with e-cuts (the
set of edges intersecting  a dashed line).  }
\end{center}
\end{figure}



For $G \in \mathcal{G}$, we now define a mapping from
$\mathcal{M}(G)$ to $J({\mathbf{F}(G)})$. For any $M \in
\mathcal{M}(G)$, let $\sigma(M)$ denote the set of faces in the face
sequence by a Z-transformation sequence  from $M$ to $M^{\hat{0}}$.
By Lemma \ref{delta}, we have
$$\sigma(M) =\{f \in \cal F(G) \mid f  \mbox{ is contained in the interior of some cycle in}\, M
\oplus M^{\hat{0}} \}.$$ In particular,
$\sigma(M^{\hat{0}})=\emptyset$, and $\sigma(M^{\hat{1}})=
\mathcal{F}(G)$ since $M^{\hat{1}} \oplus M^{\hat{0}}$ is just the
boundary  of $G$ by Lemma \ref{altbound}.

\begin{Lemma}$\sigma:
\mathcal{M}(G)\rightarrow {J}(\mathbf{F}(G))$ is an injective
mapping.
\end{Lemma}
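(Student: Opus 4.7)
The plan is to verify two things: (i) that $\sigma(M)$ is an order ideal of $\mathbf{F}(G)$, so that $\sigma$ really maps into $J(\mathbf{F}(G))$, and (ii) that $\sigma$ is injective.

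For (i), I would fix $f\in\sigma(M)$ and $f'\preceq f$ in $\mathbf{F}(G)$, and show $f'\in\sigma(M)$. Since $M^{\hat 0}\preceq M\preceq M^{\hat 1}$ in the FDL $\mathbf M(G)$, I can concatenate a directed path of $\vec Z(G)$ from $M^{\hat 1}$ to $M$ with one from $M$ to $M^{\hat 0}$, producing a $Z$-transformation sequence from $M^{\hat 1}$ to $M^{\hat 0}$ passing through $M$. By the definition of $\sigma$ (together with Lemma \ref{delta}, which guarantees that $\sigma(M)$ does not depend on the chosen sequence from $M$ to $M^{\hat 0}$), the faces transformed in the second half of this concatenated sequence are exactly the elements of $\sigma(M)$. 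Since $f$ lies in that second half and $f'\preceq f$, Lemma \ref{2opbg} forces $f'$ to come after $f$ in the whole sequence, hence also after $M$, so $f'\in\sigma(M)$.

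For (ii), I would exploit the simplicity of $Z$-transformations granted by Lemma \ref{2opbg}: every inner face of $G$ is transformed at most once in any Z-transformation sequence from $M$ to $M^{\hat 0}$, and the set of faces transformed is $\sigma(M)$. Telescoping the symmetric differences along such a sequence then yields
\[M\oplus M^{\hat 0}=\bigoplus_{f\in\sigma(M)}\partial f,\]
where $\partial f$ is the edge set of the boundary of $f$. Consequently $M=M^{\hat 0}\oplus\bigoplus_{f\in\sigma(M)}\partial f$ is completely determined by $\sigma(M)$, so $\sigma(M_1)=\sigma(M_2)$ immediately gives $M_1=M_2$.

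The main obstacle I anticipate is in step (i): one has to be confident that the two descriptions of $\sigma(M)$ (as ``the face sequence of some Z-sequence from $M$ to $M^{\hat 0}$'' and as ``faces lying in the interior of cycles of $M\oplus M^{\hat 0}$'') genuinely coincide, and then splice Z-sequences correctly so that Lemma \ref{2opbg}, which is stated for sequences from $M^{\hat 1}$ to $M^{\hat 0}$, can be invoked for the concatenated path. Once this bookkeeping is in place, both the order-ideal property and the injectivity fall out cleanly from Lemmas \ref{delta} and \ref{2opbg}.
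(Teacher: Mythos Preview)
Your proposal is correct and follows essentially the same route as the paper: for the order-ideal property you concatenate a $Z$-sequence through $M$ and invoke Lemma~\ref{2opbg} exactly as the paper does, and for injectivity you spell out explicitly (via the telescoping identity $M=M^{\hat 0}\oplus\bigoplus_{f\in\sigma(M)}\partial f$, using simplicity) what the paper merely asserts as ``clear''. The bookkeeping concern you flag about splicing sequences is handled in the paper the same way you suggest, and the equivalence of the two descriptions of $\sigma(M)$ is exactly the content of Lemma~\ref{delta}.
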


\begin{proof}For $M\in \mathbf{M}(G)$, let $f\in \sigma (M)$. If $f'\prec f$ in $\mathbf
F(G)$, then  by Lemma \ref{2opbg}, $f'$
  always appears after $f$ in any Z-transformation sequence of
  $G$ from $M^{\hat{1}}$ to $M^{\hat{0}}$ passing through $M$. So $f'\in \sigma (M)$, and $\sigma(M)$ is an order ideal of
$\mathbf{F}(G)$. That is, $\sigma$ is  a mapping from
$\mathcal{M}(G)$ to $J({\mathbf{F}(G)})$. Further it is  clear that
$\sigma$ is injective.
\end{proof}

Further, we will show that $\sigma$ is an isomorphism between
$\mathbf{M}(G)$ and $\mathbf{J}(\mathbf{F}(G))$ in the following
theorem.

\begin{thm}\label{mcongj}
For each $G \in \mathcal{G}$,  $\mathbf{M}(G) \cong
\mathbf{J}(\mathbf{F}(G))$.
\end{thm}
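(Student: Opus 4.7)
Since $\sigma\colon\mathcal M(G)\to J(\mathbf F(G))$ has already been shown to be a well-defined injection, the plan is to promote it to a poset isomorphism by proving (i) $\sigma$ is order-preserving and (ii) $\sigma$ is a surjection under which every covering pair of $J(\mathbf F(G))$ lifts to a covering pair of $\mathbf M(G)$; together these force $\sigma$ to be an isomorphism, since covers generate the order in both finite lattices. For (i), take $M'\preceq M$ and concatenate a directed path from $M$ to $M'$ in $\vec Z(G)$ (on face-set $A$) with a directed path from $M'$ to $M^{\hat 0}$. By Lemma \ref{delta} the face-set of the combined path equals $\sigma(M)$, and by the simplicity established in Lemma \ref{2opbg} it is the disjoint union $A\sqcup\sigma(M')$; hence $\sigma(M')\subseteq\sigma(M)$.

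For (ii) I argue by induction on $|I|$. The base $I=\emptyset$ is witnessed by $M^{\hat 0}$. For the inductive step, fix a maximal element $f_0$ of $I$; then $I\setminus\{f_0\}$ is again an order ideal, so by the inductive hypothesis there is $M_1\in\mathcal M(G)$ with $\sigma(M_1)=I\setminus\{f_0\}$. It suffices to prove that $f_0$ is improper $M_1$-alternating, for then $M:=M_1\oplus f_0$ covers $M_1$ in $\mathbf M(G)$ and $\sigma(M)=\{f_0\}\cup\sigma(M_1)=I$, yielding both surjectivity and a lifted cover. To produce such an $M$, take \emph{any} Z-transformation sequence $\pi$ from $M^{\hat 1}$ to $M_1$; by simplicity its face-set is $\mathcal F(G)\setminus\sigma(M_1)=(\mathcal F(G)\setminus I)\cup\{f_0\}$, so $f_0$ appears at some position of $\pi$, and I plan to \emph{bubble} it to the final position of $\pi$ by repeated swaps with its right neighbour $g$.

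The swap is legitimate because $g$ must be incomparable to $f_0$ in $\mathbf F(G)$ and hence share no edge of $G$ with $f_0$. Indeed $g$ appears in $\pi$ with $g\ne f_0$, so $g\in\mathcal F(G)\setminus I$; Lemma \ref{2opbg} forbids $g\succ f_0$; and $g\prec f_0$ would force $g\in I$ since $I$ is an order ideal containing $f_0$, a contradiction. But adjacency of two inner faces in $G^\#$ is recorded by an arc of $\vec G^\#$, i.e.\ by a comparability in $\mathbf F(G)$; so $g$ and $f_0$ cannot be adjacent in $G^\#$, the two Z-transformations commute, and the swap preserves validity of $\pi$. After bubbling, the matching immediately before the last (now $f_0$-)transformation of the reordered $\pi$ is the desired $M$.

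The main obstacle is precisely this swap step in (ii): everything hinges on the fact, specific to $2$-connected outerplane bipartite graphs, that every edge of the inner dual $G^\#$ is recorded in $\vec G^\#$ as a comparability in $\mathbf F(G)$, because that is exactly what converts incomparability of $g$ and $f_0$ into edge-disjointness in $G$ and lets the adjacent Z-transformations commute. Once this is in place, chaining covers shows $\sigma^{-1}$ is order-preserving, and combined with (i) this completes the isomorphism $\mathbf M(G)\cong\mathbf J(\mathbf F(G))$.
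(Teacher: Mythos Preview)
Your proof is correct but follows a genuinely different route from the paper's. The paper works \emph{top-down}: it shows directly that, for any $M$, the proper $M$-alternating inner faces are exactly the maximal elements of $\sigma(M)$. The key step is an edge-by-edge check: if $f$ is maximal in $\sigma(M)$ and $e$ is a proper edge of $f$, then the face $f'$ across $e$ (if inner) covers $f$ in $\mathbf F(G)$, hence $f'\notin\sigma(M)$, hence $f'$ was transformed on the way from $M^{\hat 1}$ to $M$ while $f$ was not, forcing $e\in M$. This immediately gives the bijection on covers, and surjectivity then follows by descending one cover at a time from $\sigma(M^{\hat 1})=\mathcal F(G)$.

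You instead argue \emph{bottom-up}, building preimages by induction on $|I|$ and handling the inductive step with a bubbling/commutation argument in the $Z$-transformation sequence. Both arguments ultimately rest on the same outerplane-specific fact---that two inner faces sharing an edge are comparable in $\mathbf F(G)$---but you exploit it differently: the paper uses it to determine membership of individual edges in $M$, while you use it to show that incomparable faces have edge-disjoint boundaries so their transformations commute. The paper's route is shorter and more static; yours is more constructive and makes the role of simplicity (Lemma~\ref{2opbg}) very explicit. One small remark: your invocation of Lemma~\ref{2opbg} for the swap (to rule out $g\succ f_0$) strictly speaking concerns sequences from $M^{\hat 1}$ to $M^{\hat 0}$, but this is harmless since any directed path in $\vec Z(G)$ extends to such a sequence.
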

\begin{proof} We first  show that, for
each $M \in \mathcal{M}(G)$, if $M_1, M_2, \cdots, M_k$ are the
1-factors covered by $M$, then the order ideals $\sigma(M_1),
\sigma(M_2), \cdots, \sigma(M_k)$ in $\mathbf{M}(G)$ are exactly the
ones covered by $\sigma(M)$. By the fact that the order ideals
 in a finite poset ${\mathbf P}$ covered in $J(P)$ by an order ideal $Y$ are exactly the
sets $Y \setminus \{ x \}$ for all maximal elements $x$ of $Y$, it
is sufficient to prove that the faces which can be properly
transformed in 1-factor $M$ are exactly the maximal elements of
$\sigma(M)$.

Let $f_i$ denote properly $M$-alternating facial cycle such that
$f_i=M\oplus M_i$, $i=1,2,...,k$.  For convenience, we also use
$f_i$ to denote the corresponding inner face.  Hence $\sigma(M_i) =
\sigma(M) \setminus \{ f_i \}$. By Lemma \ref{2opbg}, all faces
which greater than $f_i$ must be transformed during any transforming
sequence from $M^{\hat{1}}$ to $M$. So each $f_i$ is an maximal
element in $\sigma(M)$, and $\sigma(M_i) = \sigma(M) \setminus \{
f_i \}$ is covered by $\sigma(M)$.

If $f$ is a maximal element of $\sigma(M)$, then by Lemma
\ref{2opbg}, all elements which greater than $f$  in $\mathbf F(G)$
have been transformed from $M^{\hat{1}}$ to $M$.  Let $e$ be any
proper edge  of $f$. Let $f'$ be the face of $G$ that has a common
edge $e$ with $f$. If $f'$ is the outer face of $G$, by Lemma
\ref{altbound} $e$ remains unchanged in any Z-transformation from
$M^{\hat{1}}$ to $M$. Otherwise,  $f'\in \mathcal{F}(G)$ covers $f$,
and $f' \in \mathcal{F}(G) \setminus \sigma(M)$.  Then $e \in M$
since $f'$ has been transformed but $f$ not from $M^{\hat{1}}$ to
$M$ and $e$ is  an improper edge  of $f'$.  Hence, all proper edges
of $f$ belong to $M$; that is,  $f$ is proper $M$-alternating, as
expected.

Further, $\sigma$ is surjective since
$\mathcal{F}(G)=\sigma(M^{\hat{1}})$ is the maximum element of
$J(\mathbf{F}(G))$.  Therefore $\sigma$ is an isomorphism between
$\mathbf{M}(G)$ and $\mathbf{J}(\mathbf{F}(G))$.
\end{proof}

Given an undirected tree $T=(V, E)$, $\vec{T}$ is  any orientation
of $T$. Of course, directed tree $\vec{T}$ has no (directed) cycles.
Similar to $\mathbf{F}(G)$, we could consider $\vec{T}$ as the Hasse
diagram of a poset $\mathbf{T}$. As a consequence, we obtain another
irreducible MDL described in the following result.

\begin{thm}\label{jtree}
Let $\mathbf{T}$ be a poset derived from any orientation $\vec{T}$
of a tree $T$. Then $\mathbf J(\mathbf{T})$ is an irreducible MDL.
\end{thm}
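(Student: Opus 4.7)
The plan is to apply Theorem \ref{mcongj}: for every $G \in \mathcal{G}$, $\mathbf{M}(G) \cong \mathbf{J}(\mathbf{F}(G))$, where $\mathbf{F}(G)$ is the poset whose Hasse diagram is $\vec{G}^\#$. Since any $G \in \mathcal{G}$ is 2-connected and outerplane, $G^\#$ is already a tree, so $\vec{G}^\#$ is an oriented tree. It therefore suffices to construct an elementary graph $G \in \mathcal{G}$ with $\vec{G}^\# \cong \vec{T}$; then $\mathbf{F}(G) \cong \mathbf{T}$, Theorem \ref{mcongj} gives $\mathbf{M}(G) \cong \mathbf{J}(\mathbf{T})$, and Theorem \ref{ is irreducible} yields irreducibility.

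I would construct such $G$ by induction on $|V(T)|$. The base case $|V(T)|=1$ is handled by taking $G$ to be a single even cycle, e.g.\ a hexagon. For the inductive step, pick a leaf $v$ of $T$ adjacent to $u$, let $\vec{T'}$ denote the restriction of $\vec{T}$ to $T-v$, and apply the inductive hypothesis to produce $G' \in \mathcal{G}$ with $\vec{G'}^\# \cong \vec{T'}$; let $f_u$ be the face of $G'$ corresponding to $u$. I would then attach a fresh even cycle $f_v$ externally to $G'$ by identifying one of its edges with a boundary edge of $f_u$ lying on the outer face of $G'$. The new graph $G$ is routinely verified to remain 2-connected, outerplane, bipartite, and elementary, with inner dual equal to $T$.

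The main obstacle is controlling the orientation of the new dual arc so that it matches $\vec{T}$. A direct analysis of the direction rule used to define $\vec{G}^\#$ shows that the new arc points from $f_u^*$ to $f_v^*$ precisely when the shared edge is proper for $f_u$, and from $f_v^*$ to $f_u^*$ when it is improper for $f_u$. Since proper and improper edges alternate around any even cycle, taking each face $f_w$ of the construction to be an even cycle of sufficient length (for instance, of length at least $2(\deg_T(w)+2)$) ensures that after all the other neighbors of $u$ have already been attached, both a proper and an improper boundary edge of $f_u$ still lie on the outer face of $G'$. An appropriate choice of gluing edge then produces the required orientation of the new arc, so the induction completes and the theorem follows.
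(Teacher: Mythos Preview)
Your approach is essentially the paper's: both reduce to constructing some $G\in\mathcal G$ with $\vec G^{\#}\cong\vec T$ and then invoke Theorem~\ref{mcongj}; the paper builds all faces at once with uniform face length $2\Delta$, whereas you attach one leaf at a time with per-vertex face length $2(\deg_T(w)+2)$, which amounts to the same construction. Two small points to clean up: (i) your orientation rule is reversed---under the paper's convention the dual arc goes from $f_u^*$ to $f_v^*$ precisely when the shared edge is \emph{improper} for $f_u$ (see the rule stated inside the paper's proof of Theorem~\ref{jtree}); (ii) your inductive hypothesis should be stated so that the face lengths are fixed in advance by $\deg_T$, not by degrees in the shrinking subtree $T'$, since otherwise the recursively built $f_u$ may be too short to accept the final attachment. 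Neither point affects the validity of the argument.
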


\begin{proof} By Theorem \ref{mcongj}, it is sufficient
to show that there is a $G \in \mathcal{G}$ such that $\vec{G}^\#
\cong \vec{T}$. If $|V(T)|\leq 2$, it is obvious. So let $|V(T)|\geq
3$. Let $\Delta$ denote the maximum degree of $T$.
We now construct such a graph $G$ as follows. For any vertex $v$ of
$T$,  we gave an inner face $f_v$ bounded by a cycle of length
$2\Delta$.
  If a vertex  $u$ of $T$ is adjacent to $v$,  then we place the
  corresponding inner face $f_u$ outside $f_v$ by overlapping their edges $e'
\in f_u$ and $e'' \in f_v$ to a new edge $e \in G$, satisfying the
orientation rule of $\mathbf{F}(G)$:  $(u,v)$ is an arc from $u$ to
$v$ if and only if \;$e$ goes from the black end-vertex to the white
one  along the clockwise orientation of $f_u$.  Since $f_v$ has
$2\Delta$ edges and $v$ has at most $\Delta$ going-out (going-in)
arcs in the directed $\vec T$, for all other neighbors of $v$ we can
proceed similarly.  By repeating the above process,  one can
construct an outerplane bipartite graph $G\in \cal G$ such that
$\vec{G}^\# \cong \vec{T}$. For example, see Fig.  \ref{fig07}.
\end{proof}
\begin{figure}[!htbp]
\begin{center}
\refstepcounter{figure} \label{fig07}
\includegraphics[width=14cm,keepaspectratio]{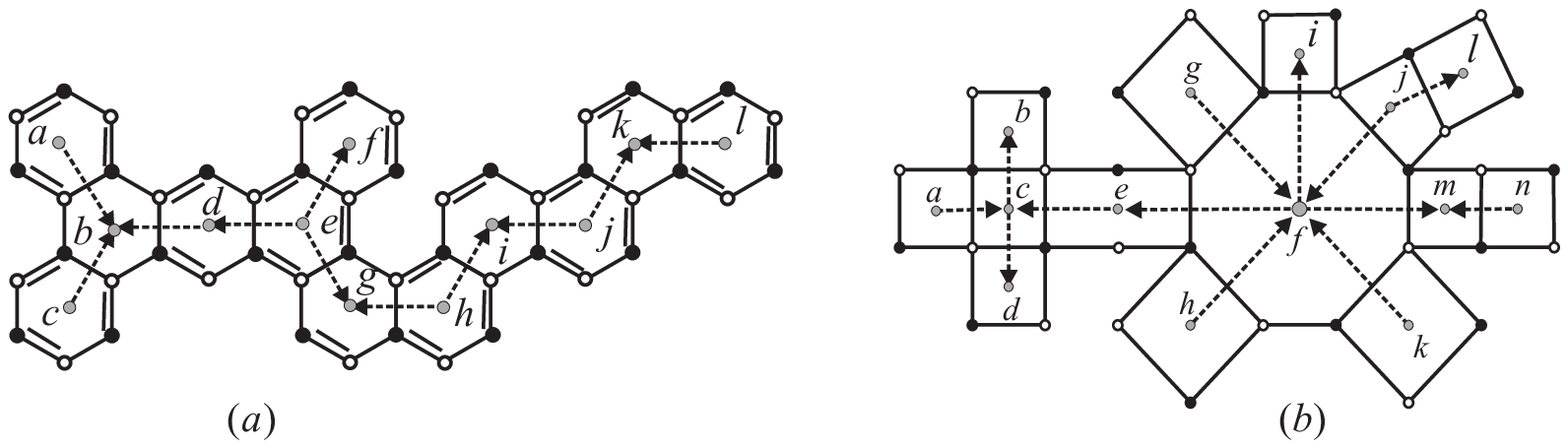}\\
Figure \ref{fig07}. {Two outerplane bipartite graphs with the
orientation for their inner duals. }
\end{center}
\end{figure}

In fact, in the above construction the face degree of $f_v$ may be
smaller than $2\Delta$.  The least value of face degree of $f_v$ may
reach $2\max \{\Delta_v^-,\Delta_v^+,2\}$, where
$\Delta_v^-,\Delta_v^+$ are in- or out-degree of $v$ in $\vec{T}$.
During the process, we may need to exchange the order of in- and
out-edges such that the edges surrounded $f_v$ are alternately in-
and out-edges as many as possible;  See Fig.  \ref{fig07} (b).




\end{document}